\definecolor{myred}{RGB}{251,154,133}
\definecolor{myblue}{RGB}{153,206,227}
\definecolor{mylightblue}{RGB}{0, 150, 255}
\definecolor{mygreen}{RGB}{32, 210, 64}
\definecolor{mygray}{RGB}{220, 220, 220}
\tikzset{snake it/.style={decorate, decoration=snake}}
\newtheorem{theorem}{Theorem}
\newtheorem{definition}{Definition}
\newtheorem{lemma}{Lemma}[section]
\newtheorem{remark}{Remark}
\newtheorem{Proposition}{Proposition}
\newtheorem{corollary}{Corollary}
\newtheorem{counterexample}{Counterexample}
\def\beq{ \begin{equation} }
\def\eeq{ \end{equation} }
\def\square{\vcenter{\vbox{\hrule height .4pt
  \hbox{\vrule width .4pt height 5pt \kern 5pt
        \vrule width .4pt} \hrule height .4pt}}}
\def\RR{\mathbb{R}}
\def\ZZ{\mathbb{Z}}
\def\QQ{\mathbb{Q}}
\newcommand{\upspace}{\mathbb{H}}
\newcommand{\sharm}{\bar{\mathcal{H}}}
\begin{document}

\title{On sets of zero stationary harmonic measure}

\author{Eviatar B. Procaccia}
\address[Eviatar B. Procaccia \footnote{Research supported by NSF grant DMS-1812009.}]{Texas A\&M University}
\urladdr{www.math.tamu.edu/~procaccia}
\email{eviatarp@gmail.com}
 
\author{Yuan Zhang}
\address[Yuan Zhang \footnote{The manuscript of the first version of this paper was done when YZ was a visiting assistant professor at Texas A\&M University} \footnote{Research partially supported by Peking University grant 7101300248, 7101300249}]
{Peking University}
%\urladdr{http://www.math.tamu.edu/~yzhang1988/}
\email{zhangyuan@math.pku.edu.cn}

\maketitle

\begin{abstract}
In this paper, we prove that any subset with an appropriate sub-linear horizontal growth has a non-zero stationary harmonic measure. On the other hand, we also show any subset with super-linear horizontal growth will have a $0$ stationary harmonic measure at every point. This result is fundamental to any future study of stationary DLA. As an application we prove that any possible aggregation process with growth rates proportional to the stationary harmonic measure has non zero measure at all times. 
\end{abstract}

\section{Introduction}

In this paper, we present conditions for an infinite subset in the upper half planar lattice to have non-zero stationary harmonic measure. Stationary harmonic measure is first introduced in \cite{Stationary_Harmonic_Measure}, and plays a fundamental role in the study of diffusion limit aggregation (DLA) models on non-transitive graphs with absorbing boundary conditions. Roughly speaking, the stationary harmonic measure of a subset is the expected number of random walks hitting each of its points, when we drop infinitely many random walks from a horizontal line ``infinitely high" and stop once they first hit the subset or the $x-$axis. The stationary harmonic measure can be used to construct stationary DLA , which plays an equivalent role as the harmonic measure in $\ZZ^d$ in the construction of the DLA model (see \cite{harmonic_measure_1987,DLA_long,DLA_re-visit}). The analysis in this paper is crucial to the understanding of aggregation phenomenon under absorbing boundary conditions. The non rigorous message of this papers is that if an aggregation process grows too fast it might stop growing but if the growth is moderate it will keep on growing forever. 

For the precise discussions, we first set some notations defined in \cite{Stationary_Harmonic_Measure}. Let $$\upspace=\{(x,y)\in \ZZ^2, y\ge 0\}$$ be the upper half planar lattice (including $x$-axis), and $S_n, n\ge 0$ be a 2-dimensional simple random walk. For any $x\in \ZZ^2$, we will write 
$
x=(x_1,x_2)
$,
with $x_i$ denoting the $i$-th coordinate of $x$. Then define the horizontal line of height $n$
$$
L_n=\{(x,n), \ x\in \ZZ\}.
$$
For any subset $A\subset \ZZ^2$ abbreviate the first hitting time 
$$
\bar \tau_A=\min\{n\ge 0, \ S_n\in A \}
$$
and the first exit time
$$
\tau_A=\min\{n\ge 1, \ S_n\in A \}.
$$
For any subsets $A_1\subset A_2$ and $B$ and any $y\in \ZZ^2$, by definition one can easily check that 
\beq
\label{basic 1}
P_y\left(\tau_{A_1}<\tau_{B}\right)\le P_y\left(\tau_{A_2}<\tau_{B}\right), 
\eeq
and that 
\beq
\label{basic 2}
P_y\left(\tau_{B}<\tau_{A_2}\right)\le P_y\left(\tau_{B}<\tau_{A_1}\right). 
\eeq
Now we define the stationary harmonic measure on $\upspace$. For any $B\subset \upspace$, any edge $\vec e=x\to y$ with  $x\in B$, $y\in \upspace\setminus B$ and any $N$, we define 
\beq
\label{harmonic measure edge}
\sharm_{B, N}(\vec e)=\sum_{z\in L_N\setminus B} P_z\left(S_{\bar \tau_{B\cup L_0}}=x, S_{\bar \tau_{B\cup L_0}-1}=y\right).
\eeq

\begin{remark}
As a paper on the nearest neighbor aggregation process, \cite{Stationary_Harmonic_Measure} concentrate mostly on the case when $B$ or $B\cup L_0$ is connected. However, it is clear to see that the definition of $\sharm_{B, N}(\vec e)$ as well as the convergence in Proposition \ref{proposition_well_define} is not related to connectivity and thus hold for any $B$. 
\end{remark}

By definition, $\sharm_{B, N}(\vec e)>0$ only if $y\in \partial^{out}B$ and $|x-y|=1$. For all $x\in B$, we can also define  
\beq
\label{harmonic measure point old}
\sharm_{B, N}(x)=\sum_{\vec e \text{ starting from } x}\sharm_{B, N}(\vec e) =\sum_{z\in L_N\setminus B} P_z\left(S_{\bar \tau_{B\cup L_0}}=x\right).
\eeq
And for each point $y\in \partial^{out}B$, we can also define 
\beq
\label{harmonic measure point new}
\hat {\mathcal{H}}_{B, N}(y)=\sum_{\tiny\begin{aligned}\vec e &\text{ starting in $B$}\\ &\text{ ending at } y\end{aligned}}\sharm_{B, N}(\vec e) =\sum_{z\in L_N\setminus B} P_z\left(\tau_{B}\le \tau_{L_0} , S_{\bar \tau_{B\cup L_0}-1}=y\right).
\eeq
In \cite{Stationary_Harmonic_Measure} we prove that, 
\begin{Proposition}[Proposition 1 in \cite{Stationary_Harmonic_Measure}]
\label{proposition_well_define}
For any $B$ and $\vec e$ above, there is a finite $\sharm_{B}(\vec e)$ such that 
\beq
\lim_{N\to\infty} \sharm_{B, N}(\vec e)=\sharm_{B}(\vec e). 
\eeq
\end{Proposition}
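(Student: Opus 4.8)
The plan is to show that the sequence $\big(\sharm_{B,N}(\vec e)\big)_N$ is non‑increasing once $N$ exceeds the height of the vertices of $\vec e$, and bounded below by $0$, so that it converges to a finite limit. Write $\vec e=x\to y$ with $x\in B$ and $y\in\partial^{out}B$, and fix $N_0=x_2+2$, so that for every $N\ge N_0$ both $x$ and $y$ lie strictly below the line $L_N$. The main tool is the strong Markov property applied at the first time the walk hits $L_N\cup B\cup L_0$.

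First I would record that each $\sharm_{B,N}(\vec e)$ is finite, which is needed merely for the statement to make sense. Since $\sharm_{B,N}(\vec e)\le\sharm_{B,N}(x)\le\sum_{z\in L_N}P_z(\bar\tau_{\{x\}}<\bar\tau_{L_0})$ when $x\notin L_0$, and using the identity $G_{L_0}(z,x)=P_z(\bar\tau_{\{x\}}<\bar\tau_{L_0})\,G_{L_0}(x,x)$ for the Green's function of the walk killed on $L_0$ together with its symmetry $G_{L_0}(z,x)=G_{L_0}(x,z)$, this sum equals $G_{L_0}(x,x)^{-1}\,E_x\!\big[\#\{n<\bar\tau_{L_0}:S_n\in L_N\}\big]$, and the expected number of visits to a horizontal line before the walk reaches $L_0$ is finite by a standard gambler's‑ruin estimate on the height coordinate (the case $x\in L_0$ is even easier). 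Next comes the decomposition. Fix $N'>N\ge N_0$ and set $\sigma=\bar\tau_{L_N\cup B\cup L_0}$. On the event $\{S_{\bar\tau_{B\cup L_0}}=x,\ S_{\bar\tau_{B\cup L_0}-1}=y\}$ a walk started at $z'\in L_{N'}\setminus B$ must bring its height from $N'>N$ down to $x_2<N$ before absorption, hence its height passes through $N$ first, giving $\sigma=\bar\tau_{L_N}<\bar\tau_{B\cup L_0}\wedge\bar\tau_{L_0}$ and $S_\sigma\in L_N\setminus B$. Applying the strong Markov property at $\sigma$,
\[
\sharm_{B,N'}(\vec e)=\sum_{w\in L_N\setminus B}\Big(\sum_{z'\in L_{N'}\setminus B}P_{z'}(S_\sigma=w)\Big)\,h_N(w),\qquad h_N(w):=P_w\big(S_{\bar\tau_{B\cup L_0}}=x,\ S_{\bar\tau_{B\cup L_0}-1}=y\big),
\]
whereas directly from the definition $\sharm_{B,N}(\vec e)=\sum_{w\in L_N\setminus B}h_N(w)$. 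Write $\nu_{N',N}(w):=\sum_{z'\in L_{N'}\setminus B}P_{z'}(S_\sigma=w)$.

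The heart of the matter is the inequality $\nu_{N',N}(w)\le 1$ for every $w\in L_N\setminus B$. On $\{S_\sigma=w\}$ the walk reaches $w\in L_N$ without having touched $L_N$ earlier, so $\{S_\sigma=w\}\subseteq\{S_{\bar\tau_{L_N}}=w\}$, and hence $\nu_{N',N}(w)\le\sum_{z'\in L_{N'}}P_{z'}(S_{\bar\tau_{L_N}}=w)$. I would show this last quantity is exactly $1$: the function $f(k):=\sum_{a\in\ZZ}P_{(a,k)}(S_{\bar\tau_{L_N}}=(0,N))$ is bounded (again by the visit‑count estimate of the previous step), satisfies $f(N)=1$, and, by averaging over $a$ the harmonic equation for $u(a,k)=P_{(a,k)}(S_{\bar\tau_{L_N}}=(0,N))$, obeys $f(k)=\tfrac12\big(f(k+1)+f(k-1)\big)$ for all $k>N$; a bounded affine function equal to $1$ at $N$ is identically $1$, and horizontal translation invariance extends this to arbitrary $w$. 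Combining the two displayed identities,
\[
\sharm_{B,N}(\vec e)-\sharm_{B,N'}(\vec e)=\sum_{w\in L_N\setminus B}\big(1-\nu_{N',N}(w)\big)\,h_N(w)\ \ge\ 0 ,
\]
so $(\sharm_{B,N}(\vec e))_{N\ge N_0}$ is non‑increasing and non‑negative, hence converges to a finite $\sharm_B(\vec e)\ge 0$.

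The delicate point — and the place I expect the main obstacle — is that $B$ may have points at height $\ge N$, so a walk dropped from $L_{N'}$ can be absorbed by $B$ before it ever reaches $L_N$; the argument above handles this cleanly because any such trajectory lands on $B$ at height $\ge N>x_2$ and therefore contributes nothing to the event defining $\sharm_{B,N'}(\vec e)$, which is why no extra term appears in the decomposition. Apart from that, the only genuinely quantitative input is the "mass‑one" identity $\sum_{z\in L_{N'}}P_z(S_{\bar\tau_{L_N}}=w)=1$, and even there the subtle part is just confirming boundedness of $f$, which reduces to the same visit‑count bound used for finiteness; everything else is bookkeeping with the strong Markov property and nonnegativity.
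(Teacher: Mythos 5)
Your argument is correct, but it is not the route taken for this proposition: the paper imports Proposition \ref{proposition_well_define} from \cite{Stationary_Harmonic_Measure}, whose proof (as the paper notes) rests on the time--reversal argument of \cite{harmonic_measure_1987}, i.e.\ one rewrites $\sharm_{B,N}(\vec e)$ by reversing paths as a sum over $z\in L_N\setminus B$ of the escape probability from the edge up to $L_N$ avoiding $B\cup L_0$ times the expected number of returns of the walk from $z$ to $L_N$ before absorption, and then controls this product as $N\to\infty$; that reversed representation is exactly what the present paper re-uses quantitatively, e.g.\ in \eqref{lemma_1_1}. You instead prove eventual monotonicity: for $N'>N\ge x_2+2$ you sweep the walks from $L_{N'}$ down to their first visit of $L_N\cup B\cup L_0$, observe that trajectories absorbed by $B$ above level $N$ cannot contribute to the edge event, and bound the total hitting mass deposited on each $w\in L_N\setminus B$ by $1$, so that $\sharm_{B,N'}(\vec e)\le\sharm_{B,N}(\vec e)$; together with finiteness of each term this gives a nonnegative, nonincreasing, hence convergent sequence. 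The steps check out (the delicate points — that trajectories hitting $B$ before $L_N$ land at height $>N$ and so are harmless, and the mass-one identity — are handled correctly), and your proof is arguably more elementary and self-contained. Two remarks: the mass-one identity $\sum_{z\in L_{N'}}P_z\bigl(S_{\bar\tau_{L_N}}=w\bigr)=1$ follows in one line from horizontal translation invariance plus recurrence of the vertical coordinate, so the bounded-harmonic-function detour is unnecessary; and what your approach does not deliver is the time-reversed formula itself, which is the real payoff of the original proof since the rest of this paper (the estimates leading to \eqref{theorem_1_6}, \eqref{theorem_1_7} and \eqref{lemma_1_3}) works directly with that representation rather than with mere existence of the limit. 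On the other hand, your argument yields the extra qualitative fact that $\sharm_{B,N}(\vec e)$ decreases to its limit once $N$ exceeds the height of the edge.
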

And we call $\sharm_{B}(\vec e)$ the stationary harmonic measure of $\vec e$ with respect to $B$. We immediately have that the limits $\sharm_{B}(x)=\lim_{N\to\infty}\sharm_{B, N}(x)$ and $\hat{\mathcal{H}}_{B}(y)=\lim_{N\to\infty}\hat{\mathcal{H}}_{B, N}(y)$ also exists and we call them the stationary harmonic measure of $x$ and $y$ with respect to $B$.  

Note that the stationary harmonic measure is not a probability measure. When using the stationary harmonic measure as growth rate, or more precisely, letting $\hat{\mathcal{H}}_{B}(y)$ be the Poisson intensity the state at site $y$ changes from 0 to 1, we defined in \cite{Stationary_Harmonic_Measure} the (continuous time) DLA process in the upper half plane $\upspace$, starting from any finite initial configuration. For a finite subset $B$, it is shown in \cite[Theorem 3]{Stationary_Harmonic_Measure} that there must be an $x\in B$ such that $\sharm_B(x)>0$. This implies that the continuous DLA model will keep growing from any configuration.

Meanwhile, such treatment of using stationary harmonic measure as Poisson intensities rather than probability distribution also opens the possibility to study DLA from an {\bf infinite} initial configuration as an infinite interacting particle system. However, before possibly defining such an infinite growth model, one first has to ask which configuration in $\upspace$ can be ``habitable" for our aggregation. In fact, for infinite $B$, it is possible for $\sharm_B(\cdot)$ to be uniformly 0. Thus, for the possible DLA starting from such configuration, it will freeze forever without any growth.  The intuitive reason for such phenomena is that when $B$ is infinite, each point $x\in B$ may live in the shadow of other much higher points, which will block the random walk starting from ``infinity" to visit the former first.  In the following counterexample, we see that there can be a uniformly 0 harmonic measure even when the height of $B$ is finite for each $x-$coordinate. We encourage the reader to check the subset here has zero stationary harmonic measure before reading the proof of the main results. 
\begin{counterexample}
\label{counterexample_0}
Let 
$$
B^0=\bigcup_{n=-\infty}^\infty \{(n,k), \ k=0,1,\cdots, 2^{|n|}\}.
$$
Then $\sharm_{B^0}(x)=0$ for all $x\in B^0$. 
\end{counterexample}
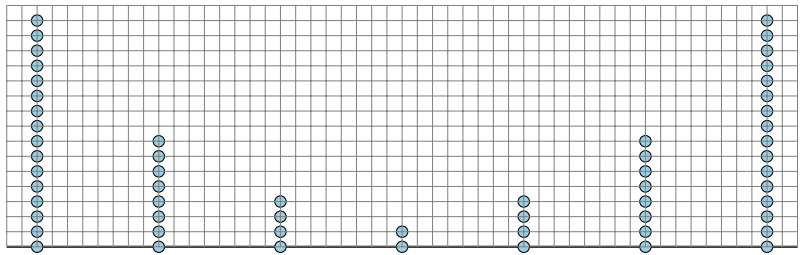
\begin{figure}[h!]
\begin{tikzpicture}[scale=0.80]%[every shadow/.style={fill=black!30,shadow xshift=0.2ex,shadow yshift=-0.2ex}]
\tikzstyle{redcirc}=[circle,
%circular drop shadow,
draw=black,fill=myred,thin,inner sep=0pt,minimum size=1.5mm]
\tikzstyle{bluecirc}=[circle,
%circular drop shadow,
draw=black,fill=myblue,thin,inner sep=0pt,minimum size=1.5mm]

\draw [thick] (-6.5,0) to (6.5,0);

\node (v1) at (0,0) [bluecirc] {};
%\node (v2) at (0.25,0) [bluecirc] {};
%\node (v3) at (0.5,0) [bluecirc] {};
%\node (v4) at (0.5,0.25) [bluecirc] {};
\node (v5) at (0,0.25) [bluecirc]{};
%\node (v6) at (0.5,-0.25) [bluecirc] {};
%\node (v7) at (0,-0.25) [bluecirc]{};

%%%%%%%%%%%%%%%%%%%%%%%%%%%%%%%%%%%%%%%%

\node (v10) at (2,0) [bluecirc] {};
\node (vm10) at (-2,0) [bluecirc] {};
\node (vm50) at (-2,0.25) [bluecirc]{};
\node (vm55) at (-2,0.5) [bluecirc]{};
\node (vm551) at (-2,0.75) [bluecirc]{};
\node (v551) at (2,0.75) [bluecirc]{};

\node (v55) at (2,0.5) [bluecirc]{};

%\node (v20) at (2.25,0) [bluecirc] {};
%\node (v25) at (2.5,0.5) [bluecirc] {};
%\node (v30) at (2.5,0) [bluecirc] {};
%\node (v40) at (2.5,0.25) [bluecirc] {};
\node (v50) at (2,0.25) [bluecirc]{};

%\node (v60) at (2.5,-0.25) [bluecirc] {};
%\node (v65) at (2.5,-0.5) [bluecirc] {};
%\node (v70) at (2,-0.25) [bluecirc]{};
%\node (v75) at (2,-0.5) [bluecirc]{};

%%%%%%%%%%%%%%%%%%%%%%%%%%%%%%%%%%%%%%%%

\node (v100) at (4,0) [bluecirc] {};
%\node (v200) at (4.25,0) [bluecirc] {};
%\node (v250) at (4.5,0.5) [bluecirc] {};
%\node (v300) at (4.5,0) [bluecirc] {};
%\node (v400) at (4.5,0.25) [bluecirc] {};
\node (v500) at (4,0.25) [bluecirc]{};
\node (v550) at (4,0.5) [bluecirc]{};

\node (vm100) at (-4,0) [bluecirc] {};
\node (vm500) at (-4,0.25) [bluecirc]{};
\node (vm550) at (-4,0.5) [bluecirc]{};

\node (v101) at (4,0.75) [bluecirc]{};
\node (vm101) at (-4,0.75) [bluecirc]{};
%\node (v102) at (4.5,0.75) [bluecirc]{};

\node (v103) at (4,1) [bluecirc] {};
\node (v104) at (4,1.25) [bluecirc] {};
\node (v105) at (4,1.5) [bluecirc] {};
\node (v106) at (4,1.75) [bluecirc] {};
%\node (v107) at (4,2) [bluecirc] {};
\node (vm103) at (-4,1) [bluecirc] {};
\node (vm104) at (-4,1.25) [bluecirc] {};
\node (vm105) at (-4,1.5) [bluecirc] {};
\node (vm106) at (-4,1.75) [bluecirc] {};
%\node (vm107) at (-4,2) [bluecirc] {};

%\node (v303) at (4.5,-0.25) [bluecirc] {};
%\node (v304) at (4.5,-0.5) [bluecirc] {};
%\node (v305) at (4.5,-0.75) [bluecirc] {};

%%%%%%%%%%%%%%%%%%%%%%%%%%%%%%%%%%%%%%%%
\node at (6,0) [bluecirc] {};
\node at (-6,0) [bluecirc] {};
\node at (6,0.25) [bluecirc] {};
\node at (-6,0.25) [bluecirc] {};
\node at (6,0.5) [bluecirc] {};
\node at (-6,0.5) [bluecirc] {};
\node at (6,0.75) [bluecirc] {};
\node at (-6,0.75) [bluecirc] {};
\node at (6,1) [bluecirc] {};
\node at (-6,1) [bluecirc] {};
\node at (6,1.25) [bluecirc] {};
\node at (-6,1.25) [bluecirc] {};
\node at (6,1.5) [bluecirc] {};
\node at (-6,1.5) [bluecirc] {};
\node at (6,1.75) [bluecirc] {};
\node at (-6,1.75) [bluecirc] {};
\node at (6,2) [bluecirc] {};
\node at (-6,2)[bluecirc] {};
\node at (6,2.25) [bluecirc] {};
\node at (-6,2.25) [bluecirc] {};
\node at (6,2.5) [bluecirc] {};
\node at (-6,2.5) [bluecirc] {};
\node at (6,2.75) [bluecirc] {};
\node at (-6,2.75) [bluecirc] {};
\node at (6,3) [bluecirc] {};
\node at (-6,3) [bluecirc] {};
\node at (6,3.25) [bluecirc] {};
\node at (-6,3.25) [bluecirc] {};
\node at (6,3.5) [bluecirc] {};
\node at (-6,3.5) [bluecirc] {};
\node at (6,3.75) [bluecirc] {};
\node at (-6,3.75) [bluecirc] {};
%\node at (6,4) [bluecirc] {};
%\node at (-6,4) [bluecirc] {};
%\node at (6,4.25) [bluecirc] {};
%\node at (-6,4.25) [bluecirc] {};
%\node at (6,4.5) [bluecirc] {};
%\node at (-6,4.5) [bluecirc] {};
%\node at (6,4.75) [bluecirc] {};
%\node at (-6,4.75) [bluecirc] {};

%\draw [cyan, xshift=4cm] plot [smooth, tension=2] coordinates { (0,0) (1,1) (2,-2) (3,0)};

\draw[step=0.25cm,gray,very thin] (-6.5,0) grid (6.5,4.00);
%\draw[step=0.25cm,gray,very thin] (-0.2,-0.2) grid (13.9,-1.49);

\end{tikzpicture}
\caption{Example of a big set with zero stationary harmonic measure at any point\label{fig:counter}}
\end{figure}
%\begin{remark}
%The monotonicity in Proposition 2 in \cite{Stationary_Harmonic_Measure} does not contradict with Counterexample \ref{counterexample_0}. The reason is that in the proof of Proposition 2 we need to interchange the order of a (finite) summation and a limit, which is not true for the infinite summation in Counterexample \ref{counterexample_0}.
%\end{remark}

In this paper, we will concentrate on characterizing the infinite subsets with zero/nonzero stationary harmonic measure. Our first results is a much stronger statement than Counterexample \ref{counterexample_0}: for any (infinite) $B\subset \upspace$, and any $x_1\in \ZZ$, define 
$$
h_{x_1}=\sup\{x_2\ge 0 , \ x_1\times[1, x_2]\subset B\}. 
$$

\begin{definition}
\label{definition: linear}
We say that $B$ has a {\bf horizontal super-linear growth} if there are constants $c\in (0,\infty)$, and $M<\infty$ such that 
$$
h_{x_1}\ge |cx_1|
$$
for all $|x_1|\ge M$.% and that 
%$$
%\left|B\setminus\{x\in\upspace, \ x_2\le C|x_1| \}\right|<\infty. 
%$$
\end{definition}

Then we have 
\begin{theorem}
\label{proposition_0}
For any $B$ which has a horizontal super-linear growth and any $x\in B$
$$
\sharm_B(x)=0. 
$$
\end{theorem}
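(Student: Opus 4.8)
\emph{Proof strategy.} The plan is to exploit the fact that horizontal super-linear growth forces the ``free region'' $D:=\upspace\setminus(B\cup L_0)$ to lie inside a cone of opening strictly less than $\pi$, so that a random walk started at height $N$ reaches a fixed bounded set near the apex only with probability $O(N^{-\gamma})$ for some $\gamma>1$ --- a decay that beats the $O(N)$ available starting sites on $L_N$. First I would record the geometry: if $(w_1,w_2)\in D$ and $|w_1|\ge M$, then $\{w_1\}\times[1,h_{w_1}]\subset B$ together with $(w_1,w_2)\notin B$ give $w_2>h_{w_1}\ge c|w_1|$, while for $|w_1|<M$ there is nothing to check. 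Hence $D\subset C_c\cup R_0$, where $C_c:=\{(w_1,w_2):w_2\ge c|w_1|\}$ and $R_0:=[-M,M]\times[1,\lceil cM\rceil]$; in polar coordinates $C_c=\{\arctan c\le\theta\le\pi-\arctan c\}$ is a cone of opening $\alpha:=\pi-2\arctan c<\pi$. Fixing $x\in B$, I would choose a radius $\rho_0=\rho_0(x,c,M)$ large enough that the Euclidean ball $\mathcal B(0,\rho_0)$ contains both $R_0$ and $x$.

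Next I would make the reduction. Take $N$ with $N>2\rho_0$ and $N>cM$; then $L_N\setminus B\subset C_c$, since $(z_1,N)\notin B$ forces $|z_1|<\max(M,N/c)$, hence $c|z_1|\le N$. For $z\in L_N\setminus B$, on the event $\{S_{\bar\tau_{B\cup L_0}}=x\}$ the walk has height $\ge1$ and lies outside $B$ at every time strictly before $\bar\tau_{B\cup L_0}$ --- so it stays in $D\subset C_c\cup\mathcal B(0,\rho_0)$ --- and it enters $\mathcal B(0,\rho_0)$ no later than $\bar\tau_{B\cup L_0}$, since $x\in\mathcal B(0,\rho_0)$. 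Letting $\sigma$ be the first such entry, the sites $S_0,\dots,S_{\sigma-1}$ lie in $D\setminus\mathcal B(0,\rho_0)\subset C_c$, so $\bar\tau_{\mathcal B(0,\rho_0)}=\sigma\le\tau_{\ZZ^2\setminus C_c}$ on this event and therefore
$$
P_z\big(S_{\bar\tau_{B\cup L_0}}=x\big)\ \le\ P_z\big(\bar\tau_{\mathcal B(0,\rho_0)}\le\tau_{\ZZ^2\setminus C_c}\big)
$$
(this also covers absorption on the $x$-axis, as $L_0\setminus\{0\}\subset\ZZ^2\setminus C_c$).

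The crucial ingredient --- and the step I expect to be the real obstacle --- is the cone estimate: there exist $\gamma=\gamma(c)>1$ and $C_0=C_0(c)<\infty$ such that, for all $r\ge1$ and $|z|\ge 2r$,
$$
P_z\big(\bar\tau_{\mathcal B(0,r)}\le\tau_{\ZZ^2\setminus C_c}\big)\ \le\ C_0\,(r/|z|)^{\gamma},
$$
with $\gamma=\pi/\alpha=\pi/(\pi-2\arctan c)$. This is the discrete counterpart of the classical fact that planar Brownian motion in a cone of opening $\alpha$ escapes with exponent $\pi/\alpha$. I would prove it either by a discrete barrier/optional-stopping argument modeled on the positive function $|w|^{\pi/\alpha}\cos\!\big(\tfrac{\pi}{\alpha}(\arg w-\tfrac\pi2)\big)$, which is harmonic in the continuum and vanishes on $\partial C_c$, or by coupling with Brownian motion, or by a scale-chaining argument: for a suitable fixed large $\lambda$, the probability that the walk crosses the annulus $\{R\le|w|\le\lambda R\}\cap C_c$ from its outer to its inner boundary is at most $\lambda^{-(1+\varepsilon)}$ (this is where opening $<\pi$ is used), and one multiplies this bound over the $\Theta(\log(|z|/r))$ scales between $r$ and $|z|$. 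The delicate point is that the exponent must be \emph{strictly} greater than $1$: the surplus $\gamma-1>0$ is precisely what overcomes the linearly many starting sites, and it is available exactly because super-linear growth makes $\alpha$ strictly below the half-plane value $\pi$ (a mere $h_{x_1}\to\infty$ would only yield $\alpha=\pi$, $\gamma=1$, and the argument would break down --- in agreement with the companion ``sub-linear'' theorem).

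Finally I would count. For $N$ large, $L_N\setminus B\subset\{(z_1,N):|z_1|<\max(M,N/c)\}$, so $|L_N\setminus B|\le 3N/c$, and each such $z$ satisfies $|z|\ge N$. Combining the last two displays,
$$
\sharm_{B,N}(x)=\sum_{z\in L_N\setminus B}P_z\big(S_{\bar\tau_{B\cup L_0}}=x\big)\ \le\ \frac{3N}{c}\,C_0\Big(\frac{\rho_0}{N}\Big)^{\gamma}=\frac{3C_0\rho_0^{\gamma}}{c}\,N^{1-\gamma}\ \longrightarrow\ 0
$$
as $N\to\infty$, because $\gamma>1$. Since $\sharm_B(x)=\lim_{N\to\infty}\sharm_{B,N}(x)$, this gives $\sharm_B(x)=0$. (Counterexample~\ref{counterexample_0}, where $h_{x_1}=2^{|x_1|}$, is the special case of explosive growth.)
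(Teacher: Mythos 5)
Your reduction is sound and genuinely different from the paper's. The geometric step (the free region $\upspace\setminus(B\cup L_0)$ is contained in the cone $C_c$ plus a bounded box, $L_N\setminus B\subset C_c$ with $|L_N\setminus B|\le 3N/c$, and the pathwise inequality $P_z\big(S_{\bar\tau_{B\cup L_0}}=x\big)\le P_z\big(\bar\tau_{\mathcal B(0,\rho_0)}\le\tau_{\ZZ^2\setminus C_c}\big)$) is correct, and the final count is exactly the right bookkeeping: everything hinges on a cone-hitting exponent strictly larger than $1$. The paper argues differently: it also traps the walk in the complement of a linear wedge, but then invokes the time-reversal identity from the proof of Proposition \ref{proposition_well_define} to rewrite the relevant harmonic measure as an \emph{upward} escape probability $P_w(\tau_{L_N}<\tau_{\hat W_c})$ weighted by the expected number of visits to $L_N$ (of order $N$), and kills that escape probability by a per-doubling estimate: to go from height $N_k$ to $2N_k$ inside the wedge complement the walk must leave a fixed-aspect-ratio rectangle through its top, which by symmetry plus the invariance principle happens with probability at most a constant strictly below $1/2$; iterating gives decay $N^{-(1+\gamma)}$, beating the factor of order $N$. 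The benefit of that arrangement is that the only probabilistic input is this elementary rectangle estimate; your arrangement dispenses with time reversal but shifts all the work onto the cone-hitting estimate.

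That is where your proposal has its one genuine gap: the estimate $P_z\big(\bar\tau_{\mathcal B(0,\rho_0)}\le\tau_{\ZZ^2\setminus C_c}\big)\le C_0(\rho_0/|z|)^{\gamma}$ with $\gamma>1$ is asserted and three proof routes are sketched, but none is carried out, and this is not a side step --- it is the entire analytic content of the theorem. In particular, the per-annulus bound $\lambda^{-(1+\varepsilon)}$ in your chaining sketch cannot be obtained by a symmetry argument the way the paper's upward per-scale bound can: descending the cone by a factor $\lambda$ forces control of the horizontal spread, and the bound genuinely requires the continuum wedge exponent $\pi/\alpha>1$ for Brownian motion together with a transfer to the lattice walk that is uniform in the scale and in the starting point on the outer arc (including points near the cone boundary), e.g. a coupling with error $o(R)$ into a slightly larger cone still of opening less than $\pi$, or an optional-stopping argument in which the discrete-Laplacian error of the barrier $|w|^{\pi/\alpha}\cos\!\big(\tfrac{\pi}{\alpha}(\arg w-\tfrac{\pi}{2})\big)$ is estimated. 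Any of these can be made to work, so the strategy is fillable as stated; but as written the crucial lemma is left unproven, whereas the paper's route is self-contained precisely because it is engineered to need only the crude ``strictly less than $1/2$ per doubling'' bound.
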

With Theorem \ref{proposition_0}, Counterexample \ref{counterexample_0} is immediate. On the other hand, we prove that for $B$'s of which the spatial growth rate has (some) sub-linear upper bound, $\sharm_B(\cdot)$ cannot be 0 everywhere: 
\begin{theorem}
\label{proposition_not_0}
For $B\subset\upspace$ if there exists an $\alpha>1$ such that 
$$
\left|B\setminus\{x\in\upspace, \ x_2\le |x_1|^{1/\alpha} \}\right|<\infty,
$$
then there must be some $x\in B$ such that $\sharm_B(x)>0$. 
\end{theorem}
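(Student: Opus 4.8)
The plan is to isolate a single, geometrically-chosen point $x^{*}\in B$ and prove $\sharm_{B}(x^{*})>0$ directly, by a first-and-second-moment estimate for the walk; the engine is that the complement in $\upspace$ of the forbidden region $R_{\alpha}:=\{x:\,x_{2}\le|x_{1}|^{1/\alpha}\}$ has half-width $\asymp x_{2}^{\alpha}\gg x_{2}$ at height $x_{2}$, so the walk escapes from bounded height to $L_{N}$ with probability of the same order $1/N$ as in the free half-plane --- which is exactly what fails at $\alpha=1$. If $B$ is finite the claim is \cite[Theorem~3]{Stationary_Harmonic_Measure}, so assume $B$ infinite. By hypothesis there are a finite $F\subset B$ and $R_{0}<\infty$ with $B\setminus F\subseteq R_{\alpha}$ and $F\subseteq[-R_{0},R_{0}]^{2}$. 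Fix any column $\{x_{1}^{*}\}\times\ZZ$ meeting $B$, put $x_{2}^{*}:=\max\{k:(x_{1}^{*},k)\in B\}$, $x^{*}:=(x_{1}^{*},x_{2}^{*})$ and $y^{*}:=(x_{1}^{*},x_{2}^{*}+1)\notin B\cup L_{0}$, so that column $x_{1}^{*}$ is free of $B$ above height $x_{2}^{*}$. Pick a small $\epsilon=\epsilon(\alpha)\in(0,1)$ and a constant $h_{0}=h_{0}(B)<\infty$ exceeding $R_{0}$, $x_{2}^{*}$ and $(|x_{1}^{*}|/\epsilon)^{1/\alpha}$. Since $\sharm_{B}(x^{*})=\lim_{N}\sharm_{B,N}(x^{*})$ by Proposition~\ref{proposition_well_define}, it is enough to bound $\sharm_{B,N}(x^{*})$ below uniformly in $N$. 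Reversing simple-random-walk paths (every step has probability $\tfrac14$) turns the defining $z$-sum into an occupation time, and keeping only the walks whose first step goes to $y^{*}$ gives, for $N$ large,
\[
\sharm_{B,N}(x^{*})=E_{x^{*}}\!\big[\#\{1\le j<\tau_{B\cup L_{0}}:S_{j}\in L_{N}\}\big]\ \ge\ \tfrac14\,E_{y^{*}}\!\big[\#\{1\le j<\tau_{B\cup L_{0}}:S_{j}\in L_{N}\}\big],
\]
so it suffices to bound below, uniformly in $N$, the expected number of visits of the walk from $y^{*}$ to $L_{N}$ before hitting $B\cup L_{0}$.

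\medskip\noindent\textbf{A corridor above the obstacles.} Because $B\setminus F\subseteq R_{\alpha}$, the region $\mathcal U:=\{x:\,x_{2}>|x_{1}|^{1/\alpha},\ x_{2}>R_{0}\}$, hence also the connected sub-region $\mathcal D:=\{x:\,|x_{1}|<\epsilon x_{2}^{\alpha},\ x_{2}>R_{0}\}$ (half-width $\epsilon x_{2}^{\alpha}$ at height $x_{2}$, rising above every $L_{N}$), is disjoint from $B\cup L_{0}$. As column $x_{1}^{*}$ is free of $B\cup L_{0}$ above height $x_{2}^{*}$, the straight vertical path from $y^{*}$ to $\ell^{*}:=(x_{1}^{*},2h_{0})\in\mathcal D$ has length $<2h_{0}$ and avoids $B\cup L_{0}$, so with probability $\ge(1/4)^{2h_{0}}=:c_{1}(B)>0$ the walk from $y^{*}$ reaches $\ell^{*}$ without meeting $B\cup L_{0}$. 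By the strong Markov property it remains to show that $E_{\ell^{*}}\big[\#\{0\le j<\tau_{B\cup L_{0}}:S_{j}\in L_{N}\}\big]$ is bounded below by a positive constant, uniformly in large $N$.

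\medskip\noindent\textbf{Two estimates.} \emph{(i)} A walk climbing from height $2h_{0}$ to height $N$ has horizontal fluctuations of order $N$, which for $\alpha>1$ is eventually far below the half-width $\epsilon x_{2}^{\alpha}$ of $\mathcal D$; comparing with the free half-plane and controlling horizontal excursions,
\[
P_{\ell^{*}}\big(\tau_{L_{N}}<\tau_{B\cup L_{0}}\big)\ \ge\ P_{\ell^{*}}\big(S\text{ reaches }L_{N}\text{ before leaving }\mathcal D\big)\ \ge\ \frac{c_{2}(B)}{N},
\]
and on this event the walk first reaches $L_{N}$ at a point $(a,N)$ with $|a|<\epsilon N^{\alpha}$. \emph{(ii)} From such an $(a,N)$, with $\epsilon$ chosen small enough that $|a|+N<(N/2)^{\alpha}$ for large $N$, the walk stays inside $\{|x_{1}|<(N/2)^{\alpha},\ x_{2}>N/2\}\subseteq\mathcal U$ --- hence avoids $B\cup L_{0}$ --- for as long as it stays in the box $\{|x_{1}-a|<N,\ x_{2}>N/2\}$, and a gambler's-ruin estimate in the vertical coordinate gives $E_{(a,N)}\big[\#\{0\le j<\tau_{B\cup L_{0}}:S_{j}\in L_{N}\}\big]\ge c_{3}(\alpha)\,N$, uniformly in such $a$ and large $N$. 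Splitting the walk from $\ell^{*}$ at its first visit to $L_{N}$ and combining (i) and (ii), $E_{\ell^{*}}\big[\#\{j<\tau_{B\cup L_{0}}:S_{j}\in L_{N}\}\big]\ge c_{2}(B)c_{3}(\alpha)$; hence $\sharm_{B,N}(x^{*})\ge\tfrac14 c_{1}(B)c_{2}(B)c_{3}(\alpha)>0$ for all large $N$, and letting $N\to\infty$ proves the theorem.

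\medskip\noindent\textbf{Where the difficulty lies.} The reversibility identity, the deterministic escape to $\ell^{*}$, and the gambler's-ruin bound (ii) are routine; the heart of the matter is estimate (i): that the walk from bounded height reaches $L_{N}$ with probability of the \emph{correct} order $1/N$ \emph{while staying above the sub-linear barrier} $x_{2}=|x_{1}|^{1/\alpha}$. I expect to obtain it by a dyadic multi-scale argument --- on each height window $2^{k}h_{0}\le x_{2}<2^{k+1}h_{0}$ the barrier lies at horizontal distance $\asymp(2^{k}h_{0})^{\alpha}\gg2^{k}h_{0}$, so the probability of a fatal horizontal excursion while the walk is in that window is stretched-exponentially small and summable in $k$, and the free-half-plane escape probability $\asymp h_{0}/N$ survives up to a constant factor. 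The exponent $\alpha>1$ enters precisely to make this sum finite; for a linear barrier the same estimate loses a power of $N$, consistent with $\sharm_{B}\equiv0$ in Theorem~\ref{proposition_0}.
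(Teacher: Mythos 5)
Your architecture is in fact the same as the paper's: the time-reversal identity turns $\sharm_{B,N}$ into (probability of escaping to $L_N$ while staying above the sub-linear barrier) times (expected number of visits to $L_N$ before hitting the forbidden set), and you then want the first factor to be $\gtrsim 1/N$ and the second $\gtrsim N$, exactly as in \eqref{lemma_1_1}--\eqref{lemma_1_3}. Your reduction to a single column-top point $x^{*}$ via a deterministic vertical path is a legitimate small simplification of the paper's passage from $l_{h_0}$ to $B_0$, and your estimate (ii) is essentially the paper's ``returning times'' bound; note, though, that with a box of horizontal half-width only $N$ the horizontal exit is a constant-probability event on the relevant time scale $\asymp N^2$, so the plain vertical gambler's ruin does not by itself give $\ge c_3 N$ as written — this is easily repaired by widening the box (the barrier is at horizontal distance $\asymp N^{\alpha}$, and the paper takes half-width $\asymp N^{1+\gamma}$ precisely so that Lemma \ref{lemma: rectangle} makes the horizontal exit stretched-exponentially unlikely).

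The genuine gap is estimate (i), which you explicitly defer (``I expect to obtain it by a dyadic multi-scale argument'') and which is the actual technical content of the paper's proof of Lemma \ref{lemma: harmonic}. The sketch you give does not work as stated. If the per-window ``fatal excursion'' probabilities are bounded by absolute constants $\varepsilon_k$ summable in $k$, subtracting $\sum_k\varepsilon_k$ from the free-half-plane escape probability $\asymp h_0/N$ yields nothing, since that target is itself far smaller than any fixed constant: the error bookkeeping must be multiplicative across dyadic scales, i.e.\ each conditional step from height $2^k$ to $2^{k+1}$ inside the allowed region must succeed with probability $\ge \tfrac12(1-\varepsilon_k)$ with $\sum_k\varepsilon_k<\infty$. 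But such a uniform conditional bound is false over all positions on $L_{2^k}$ reachable without leaving $\mathcal D$: the surviving walk may sit within $O(1)$ of $\partial\mathcal D$, where the next-step success probability is nowhere near $\tfrac12$. So one must in addition control the horizontal location at every dyadic height, confining the walk to a ``middle corridor'' whose half-width is intermediate between $2^{k}$ and $2^{\alpha k}$ — this is the paper's sections $s_i$ of half-width $\asymp 2^{(1+\gamma)k_i}$ — and carry out the step-$k$ estimate in a wedge or box re-anchored at the current position and verified to lie inside the region above the barrier (the paper's $W_{i,y}\subset W$, which is where Lemma \ref{lemma_calculus} and the choices of $\beta$, $\gamma$, $k_0$ enter), with the vertical-side exit killed by Lemma \ref{lemma: rectangle}. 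Without this construction the key lemma is asserted rather than proved, so the proposal as it stands does not establish the theorem.
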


\begin{remark}
The conditions of Theorem \ref{proposition_0} are not essential as any linear stretching of Counterexample 1 (as can be seen in Figure \ref{fig:counter})
 will have uniformly zero stationary harmonic measure. There is also a gap between the conditions of Theorems \ref{proposition_0} and \ref{proposition_not_0}. It would be interesting to obtain sharp conditions for sets of zero stationary harmonic measure.
\end{remark}

\section{Proof of Theorem  \ref{proposition_0}}

For any $B$ with a horizontal super-linear growth and any $x=(x_1,x_2)\in B$.
Recall Definition \ref{definition: linear} and let 
$$
n_1=\max\{|x_1|,M, \lceil x_2/c\rceil \}
$$
and 
$$
D_1=[-n_1,n_1]\times[-\lceil cn_1\rceil, \lceil cn_1\rceil]. 
$$
Then $x\in D_1$, and by Definition \ref{definition: linear}, 
\beq
\label{theorem_1_1}
\hat W_c\setminus D_1\subset B\setminus D_1
\eeq
where 
$$
\hat W_c=\{x\in \upspace, \ x_2< c|x_1|\}. 
$$
Moreover, it is not hard to check that for any $N>\lceil cn_1\rceil$ and $y\in L_N\setminus B$, a simple random walk starting from $y$ hits $x$ before hitting any other point in $B$ only if it hits $l_{n_1}=[-n_1,n_1]\times \lceil cn_1\rceil=L_{\lceil cn_1\rceil}\setminus \hat W_c$ before hitting $\hat W_c$. I.e., 
\beq
\label{theorem_1_2}
P_y(\tau_{x}=\tau_B)\le P_y(\tau_{l_{n_1}}<\tau_{\hat W_c}). 
\eeq
Thus by \eqref{theorem_1_1} and \eqref{theorem_1_2}
$$
\begin{aligned}
\sharm_{B,N}(x)&=\sum_{y\in L_N\setminus B}P_y(\tau_{x}=\tau_B)\\
&\le \sum_{y\in L_N\setminus \hat W_c} P_y(\tau_{l_{n_1}}<\tau_{\hat W_c})\\
&=\sum_{w\in l_{n_1}} \sharm_{l_{n_1}\cup \hat W_c,N}(w).  
\end{aligned}
$$

Then by the proof of Proposition 1 in \cite{Stationary_Harmonic_Measure} (which is based on the time reversal argument used in \cite{harmonic_measure_1987}), for any $w\in l_{n_1}$
\beq
\label{theorem_1_3}
\begin{aligned}
&\sharm_{l_{n_1}\cup \hat W_c,N}(w)\\
=\sum_{y\in L_N\setminus \hat W_c} &P_{w}\Big(\tau_{L_N}<\tau_{l_{n_1}\cup\hat W_c}, S_{\tau_{L_N}}=y\Big)E_y\left[\text{number of visits to $L_N$ in }[0,\tau_{l_{n_1}\cup\hat W_c})\right]\\
\le \sum_{y\in L_N\setminus \hat W_c} & P_{w}\Big(\tau_{L_N}<\tau_{\hat W_c}, S_{\tau_{L_N}}=y\Big) E_y\left[\text{number of visits to $L_N$ in }[0,\tau_{L_0})\right]\\
=4N\cdot P_{w}&\Big(\tau_{L_N}<\tau_{\hat W_c}\Big). 
\end{aligned}
\eeq
Now let $N_1=\lceil cn_1\rceil$ and $N_2=2N_1$. For any $z\in l_{n_1}$ define the rectangular region 
$$
D_{1,z}=\big[z-4\lceil N_2/c\rceil, z+4\lceil N_2/c\rceil\big]\times [0,N_2].
$$
Moreover, we define the four sides on the boundary of $D_{1,z}$
$$
\begin{aligned}
&\partial^1D_{1,z}=\big[z-4\lceil N_2/c\rceil, z+4\lceil N_2/c\rceil\big]\times N_2,\\
&\partial^2D_{1,z}=\big(z+4\lceil N_2/c\rceil\big)\times [0,N_2],\\
&\partial^3D_{1,z}=\big[z-4\lceil N_2/c\rceil, z+4\lceil N_2/c\rceil\big]\times 0,\\
&\partial^4D_{1,z}=-\big(z+4\lceil N_2/c\rceil\big)\times [0,N_2]\\
\end{aligned}
.$$
Note that if a random walk starting at $z$ hits $\partial^2D_{1,z}\cup\partial^3D_{1,z}\cup \partial^4D_{1,z}$ before hitting $\partial^1D_{1,z}$, it must have already hit $\hat W_c$ before reaching $L_{N_2}$. Thus
$$
P_{z}\Big(\tau_{L_{N_2}}<\tau_{\hat W_c}\Big)\le P_z\Big(\tau_{\partial^1D_{1,z}}=\tau_{\partial D_{1,z}}\Big). 
$$
Then by translation invariance we have  
$$
P_z\Big(\tau_{\partial^1D_{1,z}}=\tau_{\partial D_{1,z}}\Big)=P_0\Big(\tau_{\partial^1D_{1,0}}=\tau_{\partial D_{1,0}}\Big).
$$
And by symmetry
\beq
\label{theorem_1_3}
P_{(0,N_1)}\Big(\tau_{\partial^1D_{1,0}}=\tau_{\partial D_{1,0}}\Big)=\frac{1}{2}-\frac{1}{2}\cdot P_0\Big(\tau_{\partial^2D_{1,0}}\wedge \tau_{\partial^4D_{1,0}}<\tau_{\partial^1D_{1,0}}\wedge \tau_{\partial^3D_{1,0}}\Big).
\eeq
Note that the last term in \eqref{theorem_1_3} is the probability a random walk first reaches the two vertical sides of $D_{1,z}$ before the horizontal sides. By invariance principle, there is a constant $c>0$ {\bf independent to} $N_1$ such that 
\beq
\label{theorem_1_4}
P_{(0,N_1)}\Big(\tau_{\partial^1D_{1,0}}=\tau_{\partial D_{1,0}}\Big)\le \frac{1-c}{2}. 
\eeq
In general, define $N_k=2^{k-1} N_1$ for all $k\ge 2$, and let 
$$
l_{N_k}=L_{N_k}\setminus \hat W_c,
$$
$$
D_{k,z}=\big[z-4\lceil N_{k+1}/c\rceil, z+4\lceil N_{k+1}/c\rceil\big]\times [0,N_{k+1}],
$$
with $\partial^1D_{k,z}$ - $\partial^4D_{k,z}$ as its four sides defined as before. Using exactly the same argument as for $k=1$, we have for any $z\in l_{N_k}$ (See Figure \ref{fig:notmono}), 
\beq
\label{theorem_1_5}
P_{z}\Big(\tau_{L_{N_{k+1}}}<\tau_{\hat W_c}\Big)\le P_{(0,N_k)}\Big(\tau_{\partial^1D_{k,0}}=\tau_{\partial D_{k,0}}\Big)\le \frac{1-c}{2}.
\eeq
\begin{figure}[h!]
\centering
\begin{tikzpicture}[scale=0.80]%[every shadow/.style={fill=black!30,shadow xshift=0.2ex,shadow yshift=-0.2ex}]
\tikzstyle{redcirc}=[circle,
%circular drop shadow,
draw=black,fill=myred,thin,inner sep=0pt,minimum size=2mm]
\tikzstyle{bluecirc}=[circle,
%circular drop shadow,
draw=black,fill=myblue,thin,inner sep=0pt,minimum size=2mm]

\node (v1) at (2.5,1.6) {$N2^k$};
\node (v2) at (4.5,3.6) {$N2^{k+1}$};
\node (v3) at (1.5,2) [redcirc] {};
%\node (v3) at (2,0) [bluecirc] {\small{$7$}};
%\node (v4) at (3,0) [bluecirc] {\small{$6$}};
%\node (v5) at (4,0) [bluecirc]{\small{$5$}};
%\node (v6) at (5,0) [bluecirc] {\small{$2$}};
%\node (v7) at (5,1) [bluecirc] {\small{$3$}};
%\node (v8) at (6,1) [bluecirc] {\small{$4$}};
%\node (v9) at (7,1) [bluecirc] {\small{$1$}};
%\node (v10) at (6,2) [redcirc] {\small{$1'$}};

\draw [thick] (-4.5,0) to (7.5,0);
\draw (0,0) to (-5,5);
\draw (0,0) to (5,5);
\draw [thick,blue] (-2,2) to (2,2);
\draw [thick,blue] (-4,4) to (4,4);
\draw [dashed,blue] (v3) to (7.5,2);
\draw [dashed,blue] (v3) to (-4.5,2);
\draw [dashed,blue] (1.5,4) to (7.5,4);
\draw [dashed,blue] (1.5,4) to (-4.5,4);
\draw [dashed,blue] (7.5,0) to (7.5,4);
\draw [dashed,blue] (-4.5,0) to (-4.5,4);

\draw [myred,thick] plot [smooth, tension=2.5] coordinates { (v3) (1.5,2.5)(1.3,2.6)(1.2,2.7)(1.7,2.9) (1.7,2.5)(2.3,2.7)(2.6,3)(2.7,3.2) (2.3,3.5)(3,3.7) (3.2,4)};

%\draw[step=1cm,gray,very thin] (-0.9,-0.9) grid (7.9,2.9);
\end{tikzpicture}
\caption{Escaping probability for each step}
\label{fig:notmono}
\end{figure}
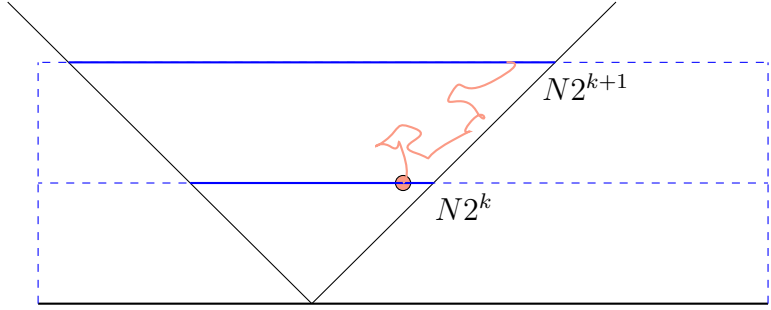 

Noting that the upper bound in \eqref{theorem_1_5} is uniform for all $z\in L_{N_k}\setminus \hat W_c$, by strong Markov property we have for any $w\in l_{n_1}$
\beq
\label{theorem_1_6}
P_{w}\Big(\tau_{L_{N_{k}}}<\tau_{\hat W_c}\Big)\le \left(  \frac{1-c}{2}\right)^{k-1}=2^{-(k-1)(1+\gamma)}
\eeq
where $\gamma=-\log_2(1-c)>0$. Recalling that $N_k=2^{k-1} N_1$, \eqref{theorem_1_3} and \eqref{theorem_1_6} give us 
\beq
\label{theorem_1_7}
\sharm_{l_{n_1}\cup \hat W_c,N_k}(w)\le 2^{k+1-(1+\gamma)(k-1)} N_1=2^{-\gamma(k-1)+2}N_1\to 0
\eeq
as $k\to\infty$. Thus the proof of Theorem \ref{proposition_0} is complete. \qed

\section{Proof of Theorem \ref{proposition_not_0}}

Recall that for $B$ in Theorem \ref{proposition_not_0} there exist an  $\alpha>1$ such that 
$$|\hat B|=\left|B\setminus\{x\in\upspace, \ x_2\le |x_1|^{1/\alpha} \}\right|<\infty.$$
Let $\QQ$ be all rational numbers. For technical reasons, consider 
$$
\QQ_1=\left\{\frac{\log(n_1)}{\log(n_2)}, \ n_1,n_2\in \ZZ, \ n_1,n_2\ge 2\right\}
$$
and 
$$
\QQ_2=\left\{\frac{b-dx}{cx-a}\in \RR,x\in \QQ_1, \ a,b,c,d\in \ZZ \right\}\supset \QQ_1
$$
which are both countable by definition. Now one can without loss of generality assume that $\alpha\notin \QQ\cup \QQ_2$. Thus for any $a,b,c,d\in \ZZ$, and $\alpha'=(a\alpha+b)/(c\alpha+d)$, as long as $a^2+b^2, \ c^2+d^2>0$, we always have $\alpha'\notin \QQ\cup\QQ_1$, which implies that for all integers $m\not=n\ge 1$, $m^{\alpha'}\not=n$.

Then we define $\bar B=B\setminus \hat B$, and
$$
h_0=\left\{
\begin{aligned}
&\max_{x\in \hat B} \{x_2\}, \ \ \hspace{2.57 in} \text{if } \hat B\not=\emptyset\\
&\min\left\{h\in \ZZ: \ B\cap \left[-\lceil h^\alpha\rceil, \lceil h^\alpha\rceil \right]\times [0,h]\not=\emptyset\right\}, \ \ \text{if } \hat B=\emptyset.
\end{aligned}
\right.
$$
Note that if $\hat B\not=\emptyset$, for each $N>h_0$ and $|i|\le \lfloor N^\alpha \rfloor$, note that $|i|\le \lfloor N^\alpha\rfloor <N^\alpha$ by the definition of $\alpha$, and that $N>h_0=\max_{x\in \hat W} \{x_2\}$. We have
$$
\{(i,N), \ |i|\le \lfloor N^\alpha\rfloor\}\cap B=\emptyset. 
$$
And if $\hat B=\emptyset$, $h_0$ is always finite since $B$ is nonempty. Then define 
$$
D_0=\left[-\lceil h_0^\alpha\rceil, \lceil h_0^\alpha\rceil \right]\times [0,h_0],
$$
and 
$$
B_0=B\cap D_0.
$$
We have $1\le |B_0|<\infty$ and $\hat B\subset B_0$. Now to prove Theorem \ref{proposition_not_0}, we only need to show that there is a constant $c>0$ such that for all sufficiently large $N$, 
\beq
\label{lower_bound_}
\sharm_{B,N}(B_0)\ge c.
\eeq
Let $l_{n}=\left[-\lfloor n^\alpha\rfloor, \lfloor n^\alpha\rfloor \right]\times n$. We first prove that 
\begin{lemma}
\label{lemma: harmonic}
There is a constant $c>0$ such that for any sufficiently large $N$
$$
\sharm_{B\cup l_{h_0},N}(l_{h_0})\ge c. 
$$
\end{lemma}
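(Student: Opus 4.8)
The plan is to estimate $\sharm_{B\cup l_{h_0},N}(l_{h_0})$ from below by the expected number of random walks, dropped from $L_N$, that reach the segment $l_{h_0}$ before hitting $B\cup L_0$. Using the time-reversal representation from the proof of Proposition \ref{proposition_well_define} (as in the proof of Theorem \ref{proposition_0}, equation \eqref{theorem_1_3}), one has
\[
\sharm_{B\cup l_{h_0},N}(l_{h_0})=\sum_{w\in l_{h_0}}\ \sum_{y\in L_N}P_w\big(\tau_{L_N}<\tau_{B\cup l_{h_0}\cup L_0},\,S_{\tau_{L_N}}=y\big)\,E_y\big[\#\{\text{visits to }L_N\text{ before }\tau_{L_0}\}\big].
\]
The inner expectation equals $4N$ (up to an inessential constant) for $y\in L_N$; so it suffices to bound $P_w(\tau_{L_N}<\tau_{B\cup l_{h_0}\cup L_0})$ from below by roughly $c/N$ for at least one well-chosen $w\in l_{h_0}$, say $w=(0,h_0)$ or a point near the top of the segment. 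In other words, I want to show that a simple random walk started near the top of $l_{h_0}$ has probability at least $c/N$ of climbing all the way up to height $N$ without ever touching $B$, $l_{h_0}$, or the $x$-axis.

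The core estimate is the following ``funnel'' argument, carried out dyadically as in Section 2 but in the reverse direction. Set $m_j=2^{j}h_0$ for $j=0,1,\dots,\lfloor\log_2(N/h_0)\rfloor$. I claim that a walk which has just reached height $m_j$ inside the cone-like region $W=\{x\in\upspace:\ x_2>|x_1|^{1/\alpha}\}$ (and outside $B_0$) has probability bounded below by a constant $p>0$, \emph{uniform in $j$}, of reaching height $m_{j+1}$ before exiting $W$ through its slanted sides or returning below $m_j/2$. The point is that near height $m_j$ the region $W$ locally contains a box of width $\asymp m_j^{\alpha}$ and height $\asymp m_j$; since $\alpha>1$, the box is much wider than it is tall, so by the invariance principle the walk crosses the vertical gap before exiting the sides with probability bounded away from $0$ independently of $j$. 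Composing these $\asymp\log(N/h_0)$ independent-in-the-Markov-sense steps and also paying an initial cost of order $1/h_0^{2}$ (a constant, since $h_0$ is fixed) to escape the finite set $B_0$ from $w$, one gets
\[
P_w\big(\tau_{L_N}<\tau_{B\cup l_{h_0}\cup L_0}\big)\ \ge\ \frac{c'}{N},
\]
where the $1/N$ comes from a standard ``gambler's ruin'' / harmonic-function estimate for the last stretch, or equivalently from the fact that the probability of a planar walk in a half-strip of height $N$ reaching the top before the bottom from a fixed level decays like $1/N$. Multiplying by the $4N$ from the time-reversal identity yields the constant lower bound claimed in the lemma.

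The main obstacle I expect is making the uniform-in-$j$ lower bound genuinely rigorous while respecting that $B$ may touch the region $W$ on a small set: the hypothesis only guarantees $B\setminus\{x_2\le|x_1|^{1/\alpha}\}$ is \emph{finite}, all of it swallowed into $B_0\subset D_0$, so above height $h_0$ the region $W$ is entirely free of $B$ — this is exactly why $h_0$ and $D_0$ were defined as they were, and it is what makes the funnel argument clean above the initial scale. The second delicate point is the choice of the arithmetic conditions on $\alpha$ (avoiding $\QQ\cup\QQ_2$): these ensure the segments $l_n=[-\lfloor n^\alpha\rfloor,\lfloor n^\alpha\rfloor]\times n$ are well-separated integer objects and that $l_{h_0}$ genuinely sits strictly inside the cone, so that the walk started at $w\in l_{h_0}$ is not immediately trapped; I would invoke these facts exactly where the geometry of $W$ versus the lattice is used. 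The gambler's-ruin $1/N$ factor and the $4N$ from time reversal must cancel precisely, so I will be careful that the constant $p$ in the dyadic step does not secretly depend on $N$ through the number of steps — it does not, because $p^{\log_2(N/h_0)}=(N/h_0)^{\log_2 p}$ is only polynomially small and is dominated once combined correctly; in fact the cleanest route is to absorb the dyadic losses into the exponent and check that the \emph{net} lower bound is $\ge c/N$ with $c$ independent of $N$, which is what \eqref{lower_bound_} requires.
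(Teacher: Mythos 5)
Your outline reproduces the paper's general strategy (time reversal, a dyadic climb inside the cone, a factor of order $N$ from visits to $L_N$), but two of the steps you treat as routine are exactly the places where the quantitative work is required, and as stated they fail. First, the escape probability: a per-scale success probability $p>0$ that is merely \emph{bounded away from zero} is not enough. Composing $\log_2(N/h_0)$ scales gives $(N/h_0)^{\log_2 p}$, and after multiplying by the factor of order $N$ from time reversal the result is $\asymp N^{1+\log_2 p}$, which tends to $0$ unless $p$ can be pushed up to $1/2$ with summable error. Your own step event (reach height $m_{j+1}$ before exiting the slanted sides \emph{or returning below $m_j/2$}) has probability at most about $1/3$ per scale, by gambler's ruin for the vertical coordinate alone, so your product is $\approx N^{-\log_2 3}$ and the final bound collapses. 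This is precisely why the paper re-centers at each landing point and uses wedges $W_{i,y}$ with flattening slopes $2^{-\gamma k_i}$ and small offsets $\lceil 2^{\beta k_i}\rceil$: each step then succeeds with probability $\tfrac12-\epsilon_i$, $\sum_i\epsilon_i<\infty$ (side exits controlled by Lemma \ref{lemma: rectangle}, see \eqref{error_2}), so the product is $\asymp 2^{-k_i}$ as in \eqref{probability_escape}. Your remark that the dyadic losses ``are dominated once combined correctly'' asserts the needed conclusion but supplies no mechanism for it.

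Second, the factor $4N$: the time-reversal identity (cf.\ \eqref{lemma_1_1}) involves the expected number of visits to $L_N$ \emph{before $\tau_{B\cup l_{h_0}\cup L_0}$}, not before $\tau_{L_0}$. Replacing the former stopping time by the latter yields the upper bound $4N$ used in the proof of Theorem \ref{proposition_0}; for the present lemma you need a \emph{lower} bound, and it is false for general landing points: $B$ may reach the curve $x_2=|x_1|^{1/\alpha}$ and hence touch $L_N$ at horizontal distance of order $N^\alpha$, so near those points the visit count before hitting $B$ is $O(1)$. You must both (a) force the landing point into a middle section of $L_N$ far from where $B$ can reach (this is what the sections $s_i$ and the bound \eqref{bound_s} do), and (b) prove the visit-count lower bound $\ge cN$ there, which in the paper is the entire ``returning times'' step via the box $N^i_y\subset W$ and Lemma \ref{lemma: rectangle} (see \eqref{space_for_box}, \eqref{expected return}). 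Confinement to the cone alone gives neither. (A minor point: the condition $\alpha\notin\QQ\cup\QQ_2$ is used to exclude equality cases such as $m^{\alpha'}=n$ in the strict scale comparisons, not to make the segments $l_n$ ``well-separated''.)
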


\begin{proof}
Recall that 
$$
D_0=\left[-\lceil h_0^\alpha\rceil, \lceil h_0^\alpha\rceil \right]\times [0,h_0]. 
$$
Now define
$$
U_{h_0}=\{x\in \upspace, \ |x_1|\ge \lceil h_0^\alpha\rceil, x_2\le |x_1|^{1/\alpha}\}
$$
and
$$
\hat W=D_0\cup U_{h_0}, \ W=\upspace\setminus(D_0\cup U_{h_0}). 
$$
Note that  $(\pm \lceil h_0^\alpha\rceil, h_0)\in U_{h_0}$, which implies that $\hat W$ is connected, and that 
$$
W=\{x\in \upspace, \ |x_2|>h_0, \ x_2\ge |x_1|^{1/\alpha}\}.
$$
An illustration of the subsets above can be seen in Figure \ref{fig:subset}.
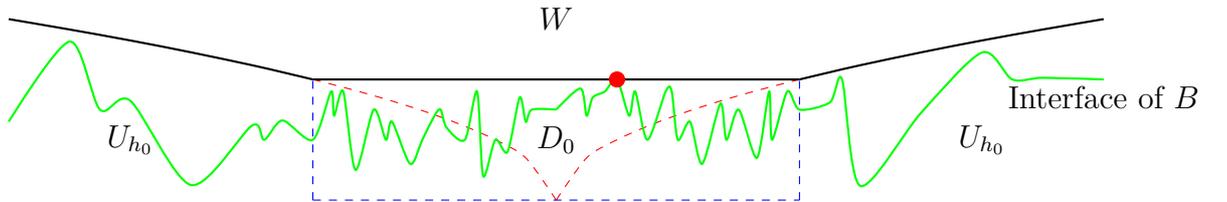
\begin{figure}[h!]
	\begin{tikzpicture}[scale=0.80]%[every shadow/.style={fill=black!30,shadow xshift=0.2ex,shadow yshift=-0.2ex}]
\tikzstyle{redcirc}=[circle,
%circular drop shadow,
draw=red,fill=red,thin,inner sep=0pt,minimum size=2mm]
\tikzstyle{bluecirc}=[circle,
%circular drop shadow,
draw=black,fill=myblue,thin,inner sep=0pt,minimum size=2mm]

\draw [thick] plot [smooth] coordinates {(4,2) (4.5,2.12)(5,2.236) (6,2.449) (7,2.646) (8,2.828) (9,3) };
\draw [thick] plot [smooth] coordinates {(-4,2) (-4.5,2.12)(-5,2.236) (-6,2.449) (-7,2.646) (-8,2.828) (-9,3)};

\draw [dashed,red] plot [smooth] coordinates { (0,0) (0.5,0.7)(1,1)(1.5,1.224)(2,1.414) (2.5,1.581)(3,1.7321)(3.5,1.87)(4,2)};
\draw [dashed,red] plot [smooth] coordinates { (0,0) (-0.5,0.7)(-1,1)(-1.5,1.224)(-2,1.414) (-2.5,1.581)(-3,1.7321)(-3.5,1.87)(-4,2)};

\draw [dashed,blue] (-4,0) to (4,0);

\draw [dashed,blue] (-4,0) to (-4,2);

\draw [dashed,blue] (4,0) to (4,2);

\draw [thick] (-4,2) to (4,2);

\filldraw[fill=red](0,0) (1,1) (2,0);

\node (v1) at (0,3) {$W$};

\node (v2) at (-7,1) {$U_{h_0}$};

\node (v3) at (7,1) {$U_{h_0}$};

\node (v4) at (0,1) {$D_0$};

\node (v4) at (9,1.7) {Interface of $B$};

\draw [thick, green] plot [smooth] coordinates {(0,1.5) (0.4,1.85) (0.5,1.4) (0.6,1.7) (0.8, 1.8) (1,2) (1.2,1.4) (1.3,1.8) (1.5,1) (1.8,1.8) (1.9,1.8) (2,1) (2.2, 1.3) (2.4,0.6) (2.7,1.6) (2.8,1) (3,1.5) (3.3,0.7) (3.5,1.8) (3.55,1) (3.8,1.8) (4,1.5) (4.5,2.12-0.5) (4.7,2) (5,2.236-2) (6,2.449-1) (7,2.646-0.2) (7.5,2) (8,2.828-0.8) (9,2)};

\node (v5) at (1,2) [redcirc]{};

\draw [thick, green] plot [smooth] coordinates {(0,1.5) (-0.4,1.5) (-0.5,1.3) (-0.6,1.7) (-0.8, 0.8) (-1,1) (-1.2,0.4) (-1.3,1.8) (-1.5,1) (-1.8,1.2) (-1.9,1.5) (-2,1.4) (-2.2, 1) (-2.4,0.6) (-2.7,1.3) (-2.8,1) (-3,1.5) (-3.3,0.5) (-3.5,1.8) (-3.65,1.4) (-3.7,1.8) (-4,1) (-4.5,2.12-0.8) (-4.8,1) (-5,2.236-1) (-6,2.449-2.2) (-7,2.646-1) (-7.5,1.5) (-8,2.828-0.2) (-9,1.3)};

\end{tikzpicture}
	\caption{Illustration of the subsets $B$, $D_0$, $U_{h_0}$ and $W$\label{fig:subset}}
\end{figure}

At the same time, recall that 
$$
\bar B\subset \{x\in \upspace, x_2\le |x_1|^{1/\alpha}\}.
$$
Combining this with the fact that $\hat B\subset D_0$ shown above, one has $B\cup l_{h_0}\subset \hat W$, which implies that 
$$
\sharm_{B\cup l_{h_0},N}(l_{h_0})\ge \sharm_{\hat W,N}(l_{h_0})\ge  \sharm_{\hat W,N}(\xi_0),
$$
where $\xi_0=(0,h_0)$. So for Lemma \ref{lemma: harmonic}, it suffices to prove that, there is a constant $c>0$, independent to $N$ such that for all sufficiently large $N$,
\beq
\label{harmonic_reformat}
\sharm_{\hat W,N}(\xi_0)\ge c.
\eeq 
The rest of this proof will concentrate on showing \eqref{harmonic_reformat}. First, by the existence of the limit \cite[Proposition 1]{Stationary_Harmonic_Measure}, it suffices to show the inequality for a subsequence $N_k\uparrow \infty$, say $N_k=2^{k}$, i.e., for all sufficiently large $k$, 
\beq
\label{harmonic_reformat_2}
\sharm_{\hat W,2^k}(\xi_0)\ge c.
\eeq
Moreover, for each sufficiently large $k$, noting that all $x\in L_{2^k}$ such that $|x_1|>2^{\alpha k}$ is in $\hat W$, 
\beq
\label{lemma_1_1}
\begin{aligned}
\sharm_{\hat W,2^k}(\xi_0)&=\hspace{-0.34 in}\sum_{z=(i,2^k), \ |i|\le \lfloor 2^{\alpha k}\rfloor}P_z(\tau_{\xi_0}\le\tau_{\hat W})\\
&=\sum_{z\in l_{2^k}} P_{\xi_0}\Big(\tau_{L_{2^k}}<\tau_{\hat W}, S_{\tau_{L_{2^k}}}=z\Big)E_z[\text{\# of visits to $l_{2^k}$ in }[0,\tau_{\hat W})].
\end{aligned}
\eeq
Intuitively speaking, we find a middle section $s \subset l_{2^k}$, whose formal definition will be presented in the later arguments (see \eqref{s_i} for details), and construct an event $A$ such that 
$$
A\subset \left\{ \tau_{L_{2^k}}<\tau_{\hat W}, \ S_{\tau_{L_{2^k}}}\in s\right\}, P(A)\ge c 2^{-k}. 
$$
At the same time, we show that for each $z\in s$, 
$$
E_z[\text{\# of visits to $l_{2^k}$ in }[0,\tau_{\hat W})]\ge c 2^k.
$$
In order to achieve this, one can put the trajectory between each $L_{2^i}$ and $L_{2^{i+1}}$ within appropriately chosen linear wedges such that 
\begin{enumerate} [(i)] 
\item The slope of these wedges flatten out to 0, which make the success probability from $L_{2^i}$ to $L_{2^{i+1}}$ close to $1/2$. 
\item The flattening out rate is slower than $2^{i(\alpha-1)}$ so all the linear wedges are still confined within the middle section of $l_{2^{i+1}}$. 
\end{enumerate}
To carry out this outline, we first define several parameters needed later in the construction. Recalling that $\alpha>1$, let $\beta=4/(\alpha+3)\in (0,1)$, and $\gamma=2(\alpha-1)/(\alpha+3)\in (0,2)$. Note that 
$$
\beta\alpha-1=\frac{4\alpha}{\alpha+3}-1=\frac{3\alpha-3}{\alpha+3}>\gamma>0,
$$
while at the same time
$$
\beta+\gamma=\frac{2\alpha+2}{\alpha+3}=: \alpha_1>1. 
$$
Let 
$$
k_0=\min\{k: \ 2^{\beta k}>2h_0\}\vee2\left\lceil\frac{\alpha+3}{\alpha-1} \right\rceil\vee \left\lceil \frac{3}{\alpha_1-1}\right\rceil.  
$$
For now, the restrictions above may look mysterious, but we will show the meaning of each of them along our proof.  

The following lemma from calculus is used repeatedly in our arguments:
\begin{lemma}
\label{lemma_calculus}
For all $x,y\ge 0$ and $\alpha>1$, 
\beq
\label{calculus}
(x+y)^{\alpha}\ge x^{\alpha}+\alpha  x^{\alpha-1} y. 
\eeq
\end{lemma}

\begin{proof}
Note that \eqref{calculus} is clearly true when $xy=0$. Assuming $x,y>0$, by mid value theorem and the fact that $x^{\alpha-1}$ is increasing, we have 
 $$
\begin{aligned}
(x+y)^{\alpha}&=x^\alpha \left(1+\frac{y}{x}\right)^\alpha\ge x^\alpha \left(1+\alpha \frac{y}{x} \right)=x^{\alpha}+\alpha  x^{\alpha-1} y. 
\end{aligned}
$$
\end{proof}

{\bf Lower bounds for the escaping probabilities:} Now back to the proof of Lemma \ref{lemma: harmonic}. Recalling that the definition of $\hat W$ and $k_0$ is independent to the choice of $k$ in \eqref{harmonic_reformat_2}, consider the event 
$$
A_0=\left\{\tau_{L_{2^{k_0}}}<\tau_{\hat W_0}, \  S_{\tau_{L_{2^{k_0}}}}=(0,2^{k_0})\right\}.
$$
One has that the probability of $A_0$ is also a positive number independent to $k$. I.e., 
$$
P_{\xi_0}(A_0)=c>0,
$$
Now for $X_1=(0,2^{k_0})$, consider a new wedge 
$$
W_0=\left\{x=(x_1,x_2)\in \upspace, \ x_2-\lceil 2^{\beta k_0}\rceil\ge |x_1|\cdot2^{-\gamma k_0} \right\}.
$$
Note that $x_2\ge \lceil 2^{\beta k_0}\rceil\ge 2h_0$, $x\notin D_0$. At the same time, by Lemma \ref{lemma_calculus}, 
$$
\begin{aligned}
x_2^\alpha&\ge \left(\lceil 2^{\beta k_0}\rceil+ |x_1|\cdot2^{-\gamma k_0} \right)^\alpha\\
&\ge \lceil 2^{\beta k_0}\rceil^\alpha+ \alpha  \lceil 2^{\beta k_0}\rceil^{(\alpha-1)} |x_1| 2^{-\gamma k_0}\\
&\ge \lceil 2^{\beta k_0}\rceil^\alpha+ \alpha 2^{[\beta(\alpha-1)-\gamma]k_0}|x_1|. 
\end{aligned}
$$
Note that $\beta(\alpha-1)-\gamma\ge \beta\alpha-1-\gamma>0$. We have that $x_2^\alpha\ge \lceil 2^{\beta k_0}\rceil^\alpha+|x_1|>|x_1|$, which implies that $W_0\subset W$. 

For $k_1=k_0+1$, and probability 
$$
p_0=P_{\xi_0}\left(\tau_{L_{2^{k_1}}}<\tau_{W_0^c}\right),
$$
one can see that 
$$
p_0\le P_{\xi_0}(\tau_{L_{2^{k_1}}}<\tau_{\hat W}). 
$$
Moreover, we have that 
$$
W_0\cap L_{2^{k_1}}\subset s_1:=\{(x_1,2^{k_1}), \  |x_1|\le 2^{k_1(1+\gamma)}\}. 
$$
Then for each $y=(y_1,y_2)\in s_1$, we can always define a wedge
$$
W_{1,y}=\left\{(x_1,x_2)\in \upspace, \ x_2-\lceil2^{\beta k_1}\rceil\ge |x_1-y_1|\cdot2^{-\gamma k_1} \right\}.
$$
And again by Lemma \ref{lemma_calculus} and the fact that $\beta\alpha-1>\gamma$, for any $y\in s_1$ and any $x\in W_{1,y}$, if $x_1=y_1$, then 
\beq
\label{x_1=y_1}
x_2^\alpha\ge 2^{\beta\alpha k_1}> 2^{k_1(1+\gamma)}\ge |y_1|=|x_1|. 
\eeq
Otherwise, note that for any $a,b\ge 1$, 
\beq
\label{pre_calculus}
ab-(a+b-1)=(a-1)(b-1)\ge 0. 
\eeq
Thus
\beq
\label{W_{1,y}_1}
\begin{aligned}
x_2^\alpha &\ge  \left(\lceil2^{\beta k_1}\rceil+ |x_1-y_1|\cdot2^{-\gamma k_1} \right)^\alpha\\
&\ge \lceil 2^{\beta k_1}\rceil^\alpha+ \alpha  \lceil 2^{\beta k_1}\rceil^{(\alpha-1)} |x_1-y_1| 2^{-\gamma k_1}\\
&\ge \lceil 2^{\beta k_1}\rceil^\alpha+\alpha2^{[\beta(\alpha-1)-\gamma] k_1}|x_1-y_1| \\
&\ge  \lceil 2^{\beta k_1}\rceil^\alpha+\alpha2^{[\beta(\alpha-1)-\gamma] k_1}+|x_1-y_1| -1\\
&\ge  \lceil 2^{\beta k_1}\rceil^\alpha+\alpha2^{[\beta(\alpha-1)-\gamma] k_1}+|x_1|-|y_1| -1.
\end{aligned}
\eeq
It is known in \eqref{x_1=y_1} that $2^{\beta\alpha k_1}\ge 2^{k_1(1+\gamma)}\ge |y_1|$. At the same time, since $\beta(\alpha-1)-\gamma>0$, $\alpha2^{[\beta(\alpha-1)-\gamma] k_1}>1$. Thus 
\beq
\label{W_{1,y}_2}
x_2^\alpha\ge |x_1|+\left(\lceil 2^{\beta k_1}\rceil^\alpha-|y_1| \right)+\left(\alpha2^{[\beta(\alpha-1)-\gamma] k_1}-1\right)>|x_1|
\eeq
which implies that $W_{1,y}\subset W$ for all $y\in s_1$. Then for $k_2=k_1+1$ define the probability 
$$
p_{1,y}=P_{y}\left(\tau_{L_{2^{k_2}}}\le \tau_{W_{1,y}^c}\right). 
$$
By translation invariance, we have $p_{1,y}=p_1$ for all such $y$'s. In general, for all $i\ge 1$ let $k_i=k_0+i$. And for all 
\beq
\label{s_i}
y\in s_i=\left\{(y_1,2^{k_i}), |y_1|\le \sum_{j=1}^i 2^{(1+\gamma)k_j}\right\}
\eeq
we define wedge
$$
W_{i,y}=\left\{(x_1,x_2)\in \upspace, \ x_2-\lceil2^{\beta k_i}\rceil\ge |x_1-y_1|\cdot2^{-\gamma k_i} \right\}.
$$
For $x=(x_1,x_2)\in W_{i,y}$, one first assume $x_1\not=y_1$. Then 
\beq
\label{W_{i,y}_1}
\begin{aligned}
x_2^\alpha &\ge  \left(\lceil2^{\beta k_i}\rceil+ |x_1-y_1|\cdot2^{-\gamma k_i} \right)^\alpha\\
&\ge \lceil 2^{\beta k_i}\rceil^\alpha+ \alpha  \lceil 2^{\beta k_i}\rceil^{(\alpha-1)} |x_1-y_1| 2^{-\gamma k_1}\\
&\ge \lceil 2^{\beta k_i}\rceil^\alpha+\alpha2^{[\beta(\alpha-1)-\gamma] k_i}|x_1-y_1| \\
&\ge  \lceil 2^{\beta k_i}\rceil^\alpha+\alpha2^{[\beta(\alpha-1)-\gamma] k_i}+|x_1-y_1| -1\\
&\ge  \lceil 2^{\beta k_i}\rceil^\alpha+\alpha2^{[\beta(\alpha-1)-\gamma] k_i}+|x_1|-|y_1| -1.
\end{aligned}
\eeq
Similar to when $i=1$, we have $\alpha2^{[\beta(\alpha-1)-\gamma] k_i}>1$ while at the same time, 
$$
\begin{aligned}
|y_1|\le \sum_{j=1}^i 2^{(1+\gamma)k_j}\le \sum_{j=0}^{k_i} 2^{(1+\gamma)j}\le \frac{ 2^{(1+\gamma)(k_i+1)}-1}{2^{(1+\gamma)}-1}. 
\end{aligned}
$$
Note that $\gamma>0$, which implies that $2^{(1+\gamma)}-1\ge 2^\gamma$. Thus
\beq
\label{bound_s}
\begin{aligned}
|y_1|\le \frac{ 2^{(1+\gamma)(k_i+1)}-1}{2^{(1+\gamma)}-1}\le \frac{2^{(1+\gamma)(k_i+1)}}{2^\gamma}=2^{(1+\gamma)k_i+1}. 
\end{aligned}
\eeq
Now recall that 
$$
\beta\alpha-1-\gamma=\frac{\alpha-1}{\alpha+3}>0, 
$$
while 
$$
k_i>k_0\ge \left\lceil\frac{\alpha+3}{\alpha-1} \right\rceil. 
$$
We have $(\beta\alpha)k_i\ge (1+\gamma)k_i+1$, which implies $\lceil 2^{\beta k_i}\rceil^\alpha> |y_1|$ by the definition of $\alpha$ , and that $x_2^\alpha> |x_1|$. And if $x_1=y_1$, one can also have
$$
x_2^\alpha\ge \lceil 2^{\beta k_i}\rceil^\alpha> |y_1|=|x_1|. 
$$
Thus we have $W_{i,y}\subset W$. Also for each $y\in s_i$, and $z\in W_{i,y}\cap L_{2^{k_{i+1}}}$,
$$
|z_1|\le |y_1|+2^{k_{i+1}+\gamma k_i}\le \sum_{j=1}^{i+1} 2^{(1+\gamma)k_j}
$$
which implies that 
$$
\left(\bigcup_{y\in s_i}W_{i,y}\right)\cap L_{2^{k_{i+1}}}\subset s_{i+1}= \left\{(y_1,2^{k_{i+1}}), |y_1|\le \sum_{j=1}^{i+1} 2^{(1+\gamma)k_j}\right\}. 
$$
\begin{figure}[h!]
\centering
\begin{tikzpicture}[scale=0.80]%[every shadow/.style={fill=black!30,shadow xshift=0.2ex,shadow yshift=-0.2ex}]
\tikzstyle{redcirc}=[circle,
%circular drop shadow,
draw=black,fill=myred,thin,inner sep=0pt,minimum size=2mm]
\tikzstyle{bluecirc}=[circle,
%circular drop shadow,
draw=black,fill=myblue,thin,inner sep=0pt,minimum size=2mm]

\node (v1) at (7.5,1.6) {$L_{2^{k_j}}$};
\node (v2) at (7.5,3.6) {$L_{2^{k_{j+1}}}$};
\node (v5) at (7.5,-0.5) {$ L_{\lceil 2^{2k_j/3}\rceil}$};
\node (v3) at (0,2) [redcirc] {};
\node (v4) at (2.5,1.4) {$s_j$};
%\node (v5) at (4,0) [bluecirc]{\small{$5$}};
%\node (v6) at (5,0) [bluecirc] {\small{$2$}};
%\node (v7) at (5,1) [bluecirc] {\small{$3$}};
%\node (v8) at (6,1) [bluecirc] {\small{$4$}};
%\node (v9) at (7,1) [bluecirc] {\small{$1$}};
%\node (v10) at (6,2) [redcirc] {\small{$1'$}};

\draw [dashed] (-4.5,0) to (7.5,0);
\draw (0,0) to (-5,5);
\draw (0,0) to (5,5);
\draw [thick,blue] (-1.5,2) to (2.5,2);
\draw [thick,blue] (2.5,2.2) to (2.5,1.8);
\draw [thick,blue] (-1.5,2.2) to (-1.5,1.8);
\draw [thick,blue] (-4,4) to (4,4);
\draw [dashed,blue] (v3) to (7.5,2);
\draw [dashed,blue] (v3) to (-4.5,2);
\draw [dashed,blue] (1.5,4) to (7.5,4);
\draw [dashed,blue] (1.5,4) to (-4.5,4);
%\draw [dashed,blue] (7.5,0) to (7.5,4);
%\draw [dashed,blue] (-4.5,0) to (-4.5,4);

\draw [myred,thick] plot [smooth, tension=2.5,xshift=-1.5cm] coordinates { (1.5,2) (1.5,2.5)(1.3,2.6)(1.2,2.7)(1.7,2.9) (1.7,2.5)(2.3,2.7)(2.6,3)(2.7,3.2) (2.3,3.5)(3,3.7) (3.2,4)};

\draw [thick] plot [smooth,xshift=0.5cm,yshift=-1cm] coordinates { (0,0) (0.5,0.7)(1,1)(1.5,1.224)(2,1.414) (2.5,1.581)(3,1.7321)(3.5,1.87)(4,2) (4.5,2.12)(5,2.236)};
\draw [thick] plot [smooth,xshift=0.5cm,yshift=-1cm] coordinates { (0,0) (-0.5,0.7)(-1,1)(-1.5,1.224)(-2,1.414) (-2.5,1.581)(-3,1.7321)(-3.5,1.87)(-4,2) (-4.5,2.12)(-5,2.236)};

%\draw[step=1cm,gray,very thin] (-0.9,-0.9) grid (7.9,2.9);
\end{tikzpicture}
\caption{Escaping probability from $L_{2^{k_j}}$ to $L_{2^{k_{j+1}}}$}
\label{fig:notmono2}
\end{figure}
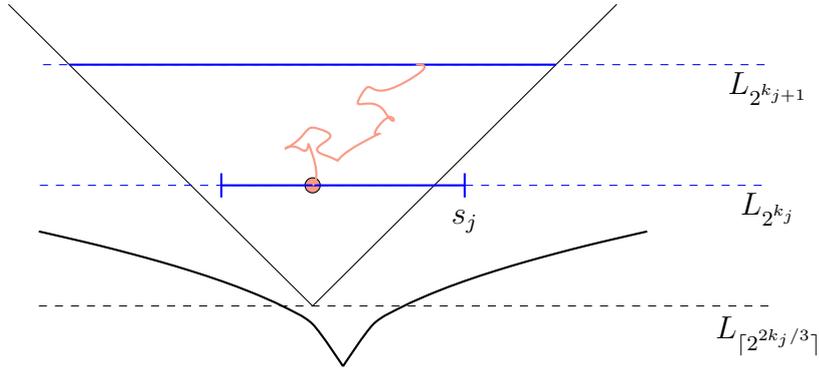 
And for all $y\in s_i$ by translation invariance, define (See Figure \ref{fig:notmono2})
$$
p_{i}=P_{y}(\tau_{L_{2^{k_{i+1}}}}\le \tau_{W_{i,y}^c}). 
$$
With the constructions above and strong Markov property, one can see that for each $i$
\beq
\label{lemma_1_2}
P_{\xi_0}\Big(\tau_{L_{2^{k_i}}}<\tau_{\hat W}\Big)\ge P_{\xi_0}\Big(\tau_{s_{i}}<\tau_{\hat W}\Big)\ge \prod_{j=0}^{i-1} p_j. 
\eeq
Now to find lower bounds for the success probability $p_i$, we need to the following simple lemma showing that it is highly unlikely for a simple random walk starting from the middle of a very wide but short rectangular box to exit from the vertical sides:  
\begin{lemma}
\label{lemma: rectangle}
For any integers $n,k\ge1$, let rectangle 
$$
R_{k,n}=[-nk,nk]\times [-k,k]
$$
with 
$$
l^v_{k,n}=\{-nk,nk\}\times [-k,k]
$$
as its two vertical sides and 
$$
l^h_{k,n}=[-nk,nk]\times \{-k,k\}
$$
as its two horizontal sides. Then there is a $\delta\in (0,1)$ such that for any $n,k\ge1$ and any integer $x\in \{0\}\times[-k,k]$, 
$$
P_x\left(\tau_{l^v_{k,n}}<\tau_{l^h_{k,n}}\right)\le (1-\delta)^n. 
$$
\end{lemma}

\begin{remark}
Similar bounds have been used in a later paper \cite{procaccia2018stationary}. 
\end{remark}

\begin{proof}
First we prove the inequality for $n=1$. For any $x\in \{0\}\times[-k,k]$, define
$$
R^{x}_{k,1}=[-k,k]\times [x_2-2k,x_2+2k],
$$
$$
l^{v,x}_{k,1}=\{-k,k\}\times [x_2-2k,x_2+2k],
$$
and
$$
l^{h,x}_{k,1}=[-k,k]\times \{x_2-2k,x_2+2k\}.
$$
Note that if a random walk starting at $x$ want to hit $l^{h,x}_{k,1}$ before hitting $l^{v,x}_{k,1}$, it has to hit $l^h_{k,1}$ before $l^v_{k,1}$. Thus
$$
P_x\left(\tau_{l^h_{k,1}}<\tau_{l^v_{k,1}}\right)\ge P_x\left(\tau_{l^{h,x}_{k,1}}<\tau_{l^{v,x}_{k,1}}\right).
$$
Then by translation and scaling invariance, there is a constant $\delta>0$ such that 
$$
P_x\left(\tau_{l^{h,x}_{k,1}}<\tau_{l^{v,x}_{k,1}}\right)=P_0\left(\tau_{l^{h,0}_{k,1}}<\tau_{l^{v,0}_{k,1}}\right)\ge \delta
$$  
for all $k$ and all $x\in \{0\}\times[-k,k]$. Now suppose we already have 
$$
P_x\left(\tau_{l^v_{k,n}}<\tau_{l^h_{k,n}}\right)\le (1-\delta)^n. 
$$
for $n$. For $n+1$, by strong Markov property we have 
$$
\begin{aligned}
P_x\left(\tau_{l^v_{k,n+1}}<\tau_{l^h_{k,n+1}}\right)&\le P_x\left(\tau_{l^v_{k,n}}<\tau_{l^h_{k,n}}\right)\sup_{x\in \{0\}\times[-k,k]} P_x\left(\tau_{l^v_{k,1}}<\tau_{l^h_{k,1}}\right)\\
&\le (1-\delta)^{n+1}.
\end{aligned}
$$ 
Thus the proof of Lemma \ref{lemma: rectangle} is complete. 
\end{proof}
With Lemma \ref{lemma: rectangle} we can bound from below the probabilities $p_i$. Recalling that by translation invariance, for each $i$, $y_{i,0}=(0,2^{k_i})$ and 
$$
W_{i,0}=\left\{(x_1,x_2)\in \upspace, \ x_2-\lceil2^{\beta k_i}\rceil\ge |x_1|\cdot2^{-\gamma k_i} \right\},
$$
we have
$$
p_i=P_{y_{i,0}}\left(\tau_{L_{2^{k_{i+1}}}}\le \tau_{W_{i,0}^c}\right).
$$
Then consider the rectangle 
$$
R_i=\Big[-\lfloor 2^{\alpha_1 k_i}\rfloor, \lfloor 2^{\alpha_1 k_i}\rfloor \Big]\times \Big[2\lceil 2^{\beta k_i}\rceil, 2^{k_i+1}\Big].
$$
Recalling that $k_0> 2(\alpha+3)/(\alpha-1)=2/(1-\beta)$, we have 
$$
2\lceil 2^{\beta k_i}\rceil< 2^{\beta k_i+2}\le 2^{k_i}
$$
and $R_i\not=\emptyset$. We claim that $R_i\subset W_{i,0}$. To show this, it suffices to check that the two corners at the bottom are within $W_{i,0}$. I.e., 
$$
\left( \lfloor 2^{\alpha_1 k_i}\rfloor,  2\lceil 2^{\beta k_i}\rceil \right)\in  W_{i,0}.
$$
To see this, note that by the definition of $\alpha$, 
$$
2^{\gamma k_i}\Big( 2\lceil 2^{\beta k_i}\rceil - \lceil 2^{\beta k_i}\rceil \Big)> 2^{(\beta+\gamma) k_i}
$$
and that $\alpha_1=\beta+\gamma$. Now let 
$$
\begin{aligned}
&\text{top}_i=\Big[-\lfloor 2^{\alpha_1 k_i}\rfloor, \lfloor 2^{\alpha_1 k_i}\rfloor \Big]\times 2^{k_i+1}\\
&\text{bottom}_i=\Big[-\lfloor 2^{\alpha_1 k_i}\rfloor, \lfloor 2^{\alpha_1 k_i}\rfloor \Big]\times 2\lceil 2^{\beta k_i}\rceil\\
&\text{left}_i=-\lfloor 2^{\alpha_1 k_i}\rfloor \times  \Big[2\lceil 2^{\beta k_i}\rceil, 2^{k_i+1}\Big]\\
&\text{right}_i=\lfloor 2^{\alpha_1 k_i}\rfloor \times  \Big[2\lceil 2^{\beta k_i}\rceil, 2^{k_i+1}\Big]. 
\end{aligned}
$$
Note that
$$
\frac{ \lfloor 2^{\alpha_1 k_i}\rfloor}{2^{k_i+1}}>  2^{(\alpha_1-1) k_i-2}\uparrow +\infty,
$$
and that $k_0\ge \lceil 3/(\alpha_1-1)\rceil$, which implies $2^{(\alpha_1-1) k_i-2}\ge 2$ for all $i$. Let 
$$
m=-2\lceil 2^{\beta k_i}\rceil+2^{k_i+1}, \ n=\left\lfloor \frac{\lfloor 2^{\alpha_1 k_i}\rfloor}{m} \right\rfloor.
$$
One can apply Lemma \ref{lemma: rectangle} to a translation of the box $R_{m,n}$ within $R_i$ and have 
\beq
\label{error_1}
P_{y_{i,0}}\left(\tau_{\text{left}_i\cup \text{right}_i}<\tau_{\text{top}_i\cup \text{bottom}_i}\right)\le (1-\delta)^{2^{(\alpha_1-1) k_i-2}}. 
\eeq
Moreover, we have 
\beq
\label{success_1}
P_{y_{i,0}}\left(\tau_{L_{2^{k_i+1}}}<\tau_{L_{2\lceil 2^{\beta k_i}\rceil}}\right)=\frac{2^{k_i}-2\lceil 2^{\beta k_i}\rceil}{2^{k_i+1}-2\lceil 2^{\beta k_i}\rceil}\ge \frac{1}{2}-2^{(\beta-1)k_i+1}. 
\eeq
Now note that 
$$
\left\{\tau_{\text{top}_i}=\tau_{\partial^{in} R_i}\right\}=\left\{\tau_{L_{2^{k_i+1}}}<\tau_{L_{2\lceil 2^{\beta k_i}\rceil}}\right\} \setminus \left\{\tau_{\text{left}_i\cup \text{right}_i}<\tau_{\text{top}_i\cup \text{bottom}_i}\right\}.
$$
We have by \eqref{error_1} and \eqref{success_1}, 
\beq
\label{error_2}
\begin{aligned}
p_i&\ge P_{y_{i,0}}\left(\tau_{\text{top}_i}=\tau_{\partial^{in} R_i}\right)\\
&\ge \frac{1}{2}-2^{(\beta-1)k_i+1}-(1-\delta)^{2^{(\alpha_1-1) k_i-2}}. 
\end{aligned}
\eeq
Now recalling \eqref{lemma_1_2}, we have 
$$
P_{\xi_0}\Big(\tau_{L_{2^{k_i}}}<\tau_{\hat W}\Big)\ge \prod_{j=0}^{i-1} p_j\ge  2^{-i}\prod_{j=0}^{i-1}\left[1-2^{(\beta-1)k_i+2}-2(1-\delta)^{2^{(\alpha_1-1) k_i-2}} \right]. 
$$
Noting that 
$$
\sum_{i=0}^\infty 2^{(\beta-1)k_i+2}+2(1-\delta)^{2^{(\alpha_1-1) k_i-2}}<\infty,
$$
there is a constant $c>0$ such that for all $i\ge 0$, 
\beq
\label{probability_escape}
P_{\xi_0}\Big(\tau_{s_{i}}<\tau_{\hat W}\Big)\ge c2^{-k_i}. 
\eeq

{\bf Lower bounds for the returning times:} Now Recall that 
$$
s_i=\left\{(y_1,2^{k_i}), |y_1|\le \sum_{j=1}^i 2^{(1+\gamma)k_j}\right\}\subset l_{2^{k_i}},
$$
and that since $\gamma=2(\alpha-1)/(\alpha+3)$, $1+\gamma=(3\alpha+1)/(\alpha+3)<\alpha$. For any $y=(y_1,,2^{k_i})\in s_i$, recalling the upper bound found in \eqref{bound_s}, we have 
\beq
\label{location_for_box}
|y_1|+ 2^{(1+\gamma)k_i+1}\le 2^{(1+\gamma)k_i+2}.
\eeq
Now note that 
$$
k_0\ge 2\left\lceil\frac{\alpha+3}{\alpha-1} \right\rceil> \frac{(\alpha+3)(\alpha+2)}{\alpha^2-1}=\frac{\alpha+2}{\alpha-\gamma-1},
$$
which implies that, $(k_i-1)\alpha>(1+\gamma)k_i+2$ for all $i\ge 0$ and that 
\beq
\label{space_for_box}
\left(2^{k_i-1} \right)^\alpha= 2^{(k_i-1)\alpha}> 2^{(1+\gamma)k_i+2}.
\eeq
Combining \eqref{location_for_box} and \eqref{space_for_box} gives us for all $y\in s_i$, 
$$
N_y^i=y+\left[-\lfloor2^{(1+\gamma)k_i+1}\rfloor,\lfloor2^{(1+\gamma)k_i+1}\rfloor\right]\times \left[-2^{k_i-1},2^{k_i-1}\right]\subset W. 
$$
And thus 
$$
E_y\left[\text{\# of visits to $L_{2^{k_i}}$ in }[0,\tau_{\hat W}]\right]\ge E_y\left[\text{\# of visits to $L_{2^{k_i}}$ in }[0,\tau_{\partial^{in} N^i_y}]\right]. 
$$
Now let $\Gamma_{i,1}=\tau_{L_{2^{k_i}}}$ and for each $j$
$$
\Gamma_{i,j}=\inf\left\{n>\Gamma_{i,j-1}, \ S_n\in L_{2^{k_i}} \right\}
$$
be the $j$th time a random walk returns to $L_{2^{k_i}}$. We have
$$
\begin{aligned}
E_y\left[\text{number of visits to $L_{2^{k_i}}$ in }[0,\tau_{\partial^{in} N^i_y}]\right]&=1+\sum_{j=1}^\infty P_y\left(\Gamma_{i,j}\le \tau_{\partial^{in} N^i_y}\right)\\
&\ge 1+\sum_{j=1}^{2^{k_i}} P_y\left(\Gamma_{i,j}\le \tau_{\partial^{in} N^i_y}\right).
\end{aligned}
$$
Again we define 
$$
\begin{aligned}
&\hat{\text{top}}_{y,i}=\left[y_1-\lfloor2^{(1+\gamma)k_i+1}\rfloor,y_1+\lfloor2^{(1+\gamma)k_i+1}\rfloor\right]\times (2^{k_i}+2^{k_i-1})\\
&\hat{\text{bottom}}_{y,i}=\left[y_1\lfloor2^{(1+\gamma)k_i+1}\rfloor,y_1+\lfloor2^{(1+\gamma)k_i+1}\rfloor\right]\times (2^{k_i}-2^{k_i-1})\\
&\hat{\text{left}}_{y,i}=\left(y_1-\lfloor2^{(1+\gamma)k_i+1}\rfloor\right)\times [2^{k_i}-2^{k_i-1},2^{k_i}+2^{k_i-1}] \\
&\hat{\text{right}}_{y,i}=\left(y_1+\lfloor2^{(1+\gamma)k_i+1}\rfloor\right)\times [2^{k_i}-2^{k_i-1},2^{k_i}+2^{k_i-1}] 
\end{aligned}
$$
as the four sides of $\partial^{in}N^i_y$. Note that for any $1\le j\le2^{k_i}$, 
$$
\begin{aligned}
P_y\left(\Gamma_{i,j}\le\tau_{\hat{\text{top}}_{y,i}}\wedge \tau_{\hat{\text{bottom}}_{y,i}} \right)&\ge P_y\left(\Gamma_{i,j}\le \tau_{L_{2^{k_i}+2^{k_i-1}}}\wedge\tau_{L_{2^{k_i}-2^{k_i-1}}} \right)\\
&=\left(1-2^{-k_i}\right)^j. 
\end{aligned}
$$
Moreover, 
$$
\begin{aligned}
P_y\left(\Gamma_{i,j}\le \tau_{\partial N^i_y}\right)&=P_y\left(\Gamma_{i,j}\le\tau_{\hat{\text{top}}_{y,i}}\wedge \tau_{\hat{\text{bottom}}_{y,i}} \right)\\
 &-P_y\left(\tau_{\hat{\text{left}}_{y,i}}\wedge \tau_{\hat{\text{right}}_{y,i}}< \Gamma_{i,j}\le\tau_{\hat{\text{top}}_{y,i}}\wedge \tau_{\hat{\text{bottom}}_{y,i}} \right)\\
 &\ge \left(1-2^{-k_i}\right)^j-P_y\left(\tau_{\hat{\text{left}}_{y,i}}\wedge \tau_{\hat{\text{right}}_{y,i}}<\tau_{\hat{\text{top}}_{y,i}}\wedge \tau_{\hat{\text{bottom}}_{y,i}} \right). 
 \end{aligned}
$$
Note that 
$$
\frac{\lfloor2^{(1+\gamma)k_i+1}\rfloor}{2^{k_i-1}}>2^{\gamma k_i}. 
$$
Again by Lemma \ref{lemma: rectangle}, we have 
$$
P_y\left(\tau_{\hat{\text{left}}_{y,i}}\wedge \tau_{\hat{\text{right}}_{y,i}}<\tau_{\hat{\text{top}}_{y,i}}\wedge \tau_{\hat{\text{bottom}}_{y,i}} \right)\le (1-\delta)^{2^{\gamma k_i}}.
$$
Thus,
\beq
\label{expected return}
\begin{aligned}
E_y&\left[\text{number of visits to $L_{2^{k_i}}$ in }[0,\tau_{\partial^{in} N^i_y}]\right]\\
&\ge \sum_{j=1}^{2^{k_i}} P_y\left(\Gamma_{i,j}\le \tau_{\partial^{in}  N^i_y}\right)\\
&\ge  \left(\sum_{j=1}^{2^{k_i}}\left(1-2^{-k_i}\right)^j\right)- 2^{k_i} (1-\delta)^{2^{\gamma k_i}}\\
&\ge 2^{k_i}\left(1-2^{-k_i}\right)\left[1-\left(1-2^{-k_i}\right)^{2^{k_i}}\right]-C\\
&\ge c 2^{k_i}
 \end{aligned}
\eeq
for some $c>0$ independent to $i$ and $y\in s_i$.  

Now combining \eqref{lemma_1_1}, \eqref{probability_escape} and \eqref{expected return}
\beq
\label{lemma_1_3}
\begin{aligned}
\sharm_{\hat W,2^{k_i}}(\xi_0)&=\sum_{z\in l_{2^{k_i}}} P_{\xi_0}\Big(\tau_{L_{2^{k_i}}}<\tau_{\hat W}, S_{\tau_{L_{2^{k_i}}}}=z\Big)E_z\left[\text{number of visits to $l_{2^{k_i}}$ in }[0,\tau_{\hat W})\right]\\
&\ge \sum_{y\in s_i} P_{\xi_0}\Big(\tau_{L_{2^{k_i}}}<\tau_{\hat W}, S_{\tau_{L_{2^{k_i}}}}=y\Big)E_y\left[\text{number of visits to $L_{2^{k_i}}$ in }[0,\tau_{\partial N^i_y}]\right]\\
&\ge P_{\xi_0}\Big(\tau_{L_{2^{k_i}}}<\tau_{\hat W}\Big) \inf_{y\in s_i}E_y\left[\text{number of visits to $L_{2^{k_i}}$ in }[0,\tau_{\partial N^i_y}]\right]\\
&\ge c. 
\end{aligned}
\eeq
And thus we have shown \eqref{harmonic_reformat_2} and the proof of Lemma \ref{lemma: harmonic} is complete. \end{proof}

Now back to finish the proof of Theorem \ref{proposition_not_0}, note that both $l_{h_0}$ and $B_0=B\cap D_0$ are finite and not depending on $N$. There is a $c>0$ such that for any $z\in l_{h_0}$, 
$$
P_z(\bar\tau_{B_0}=\bar\tau_{B})\ge c. 
$$
Thus by strong Markov property, 
$$
\begin{aligned}
\sharm_{B,2^{k_i}}(B_0)&=\sum_{z\in L_{2^{k_i}}\setminus B} P_z(\tau_{B_0}=\tau_B)\\
&\ge \sharm_{B\cup l_{h_0},2^{k_i}}(l_{h_0})\inf_{z\in l_{h_0}}P_z(\bar\tau_{B_0}=\bar\tau_{B})\\
&\ge c.
\end{aligned}
$$
Then taking $i\to\infty$, Proposition \ref{proposition_well_define} completes the proof of Theorem \ref{proposition_not_0}.  \qed

\section{Discussions}
%\note{need to simplify this part}
Now let's look back at the possible aggregation model. With Theorem \ref{proposition_not_0}, consider an interacting particle system first introduced in Proposition 3 of \cite{Stationary_Harmonic_Measure}: Let $\bar \xi_t$ defined on $\{0,1\}^\upspace$ with 1 standing for a site occupied while 0 for vacant, with transition rates as follows: 
\begin{enumerate}[(i)]
\item For each occupied site $x=(x_1,x_2)\in \upspace$, if $x_2>0$ it will try to give birth to each of its nearest neighbors at a Poisson rate of $\sqrt{x_2}$. If $x_2=0$, it will try to give birth to each of its nearest neighbors at a Poisson rate of $1$. 
\item When $x$ attempts to give birth to its nearest neighbors $y$ already occupied, the birth is suppressed.   
\end{enumerate}

In \cite{Stationary_Harmonic_Measure} we prove that $\bar \xi_t$ with transition rates above is a well defined infinite interacting particle system. And let 
$$
B_t=\{x\in \upspace, \ \bar\xi_t(x)=1\}. 
$$ 
Moreover, recalling that in the proof of Lemma 6.2 in \cite{Stationary_Harmonic_Measure}, for any $x\in \upspace$, $x\in L_0$ and $0\le t$, we define subset $I_{0,t}(x)$ as the collection of all possible offsprings of the particle at $x$ when at time $t$, and let 
$$
\mathcal{I}_{t,T}(x)=\sup_{y\in I_{t,T}(x)}|x-y|.
$$
When $B_0=L_0$, for any $x'\in B_t$, one can easily check by definition there must be an $x\in L_0$ such that $x'\in I_{0,t}(x)$, which implies that
$$
B_t=\bigcup_{x\in L_0}I_{0,t}(x). 
$$
Moreover, by (horizontal) translation invariance, we have $I_{0,t}(x)$ are identically distributed for all $x$. 

Then by Theorem 6 in \cite{Stationary_Harmonic_Measure}, we have for any $n\ge 1$
$$
E\left[\mathcal{I}_{0,t}(x)^{2n}\right]<\infty, 
$$
which implies that 
$$
\sum_{x\in L_0} P\left(\mathcal{I}_{0,t}(x)^{2n}\ge |x_1|\right)<\infty. 
$$
Then by Borel-Cantelli Lemma, with probability one for all $x\in L_0$ sufficiently far away from $0$, $\mathcal{I}_{0,t}(x)<|x_1|^{1/2n}$, which implies that 
\beq
\label{Borel-Cantelli}
\left|B_t\setminus\{x\in\upspace, \ x_2\le |x_1|^{1/n} \}\right|<\infty.
\eeq
Combining Theorem \ref{proposition_not_0} and \eqref{Borel-Cantelli}, 
\begin{corollary}
\label{corollary_not_0}
For any $t\ge 0$ and $B_t$ defined above, there must be some $x\in B_t$ such that $\sharm_{B_t}(x)>0$. 
\end{corollary}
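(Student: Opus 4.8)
The plan is to verify the hypothesis of Theorem~\ref{proposition_not_0} for the random set $B_t$. Since that theorem is a purely deterministic statement about subsets of $\upspace$, it suffices to produce a fixed $\alpha>1$ such that $\bigl|B_t\setminus\{x\in\upspace,\ x_2\le|x_1|^{1/\alpha}\}\bigr|<\infty$ holds with probability one; I will take $\alpha=n$ for a fixed integer $n\ge2$. On the full-measure event where this holds, applying Theorem~\ref{proposition_not_0} pathwise delivers an $x\in B_t$ with $\sharm_{B_t}(x)>0$.

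To establish this geometric bound I would use the genealogical description recalled above: because $B_0=L_0$, every occupied site of $B_t$ is an offspring of some site of $L_0$, so $B_t=\bigcup_{x\in L_0}I_{0,t}(x)$, and by horizontal translation invariance the maximal displacements $\mathcal{I}_{0,t}(x)=\sup_{y\in I_{0,t}(x)}|x-y|$ are identically distributed. The quantitative input is Theorem~6 of \cite{Stationary_Harmonic_Measure}, giving $E\bigl[\mathcal{I}_{0,t}(x)^{2m}\bigr]<\infty$ for every $m\ge1$. Then by Markov's inequality $P\bigl(\mathcal{I}_{0,t}(x)\ge|x_1|^{1/(2n)}\bigr)\le E\bigl[\mathcal{I}_{0,t}(x)^{4n}\bigr]\,|x_1|^{-2}$, which is summable over $x\in L_0$, so Borel--Cantelli guarantees that a.s. for all but finitely many $x\in L_0$ every offspring $y\in I_{0,t}(x)$ satisfies $|y-x|<|x_1|^{1/(2n)}$.

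It remains to convert this into \eqref{Borel-Cantelli}. For such an $x$ one has $x_2=0$, hence $y_2=|y_2-x_2|\le|y-x|<|x_1|^{1/(2n)}$ and $|y_1|\ge|x_1|-|x_1|^{1/(2n)}\ge|x_1|/2$ once $|x_1|$ is large; combining the two, $y_2<|x_1|^{1/(2n)}\le(2|y_1|)^{1/(2n)}\le|y_1|^{1/n}$ for $|y_1|$ large. Hence all but finitely many points of $B_t$ lie in $\{x\in\upspace,\ x_2\le|x_1|^{1/n}\}$, which is exactly \eqref{Borel-Cantelli} with $\alpha=n$, and the argument concludes as in the first paragraph.

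The only substantive ingredient is the moment estimate quoted from Theorem~6 of \cite{Stationary_Harmonic_Measure}: controlling the horizontal spread of the genealogy of the interacting particle system with the $\sqrt{x_2}$ birth rates is what makes ``moderate growth'' quantitative, and this is the hard part, done elsewhere. Granting it, the work here is bookkeeping --- choosing a moment order so the relevant series over $\ZZ$ converges, and the elementary estimate $(2t)^{1/(2n)}\le t^{1/n}$ for large $t$ --- after which the conclusion is immediate from Theorem~\ref{proposition_not_0}.
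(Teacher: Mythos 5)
Your proposal is correct and follows essentially the same route as the paper: use the genealogical decomposition $B_t=\bigcup_{x\in L_0}I_{0,t}(x)$, the moment bounds on $\mathcal{I}_{0,t}(x)$ from Theorem 6 of \cite{Stationary_Harmonic_Measure} together with Markov's inequality and Borel--Cantelli to obtain \eqref{Borel-Cantelli}, and then apply Theorem \ref{proposition_not_0} pathwise. In fact you spell out two steps the paper leaves implicit (invoking a higher moment so the series over $L_0$ converges, and the elementary conversion from the displacement bound to containment in $\{x_2\le |x_1|^{1/n}\}$), which is fine.
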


By Theorem 1 of \cite{Stationary_Harmonic_Measure}, for any (infinite) $A\subset \upspace$, and any $x\in A$, $\sharm_A(x)\le C \sqrt{x_2}$ for some uniform constant $C<\infty$. Thus, from any configuration, the transition rates of $\bar\xi_{Ct}$ is always larger than the stationary harmonic measure. Thus, if one could define an infinite Stationary DLA model in $\upspace$ (we hope this question will be addressed in an ongoing study \cite{Stationary_DLA}), it will be dominated by $\bar\xi_{Ct}$ and Corollary \ref{corollary_not_0} shows that the infinite stationary DLA model starting from $L_0$, if well defined, will never hit an absorbing state and stop growing. The interesting thing here is, in order to show the model grows, we actually need to show it grows slowly. 

\section*{Acknowledgments} 
We would like to thank Itai Benjamini, Noam Berger, Marek Biskup and Gady Kozma for fruitful discussions. %\note{change references to bibtex}

\bibliography{upper_half_plane}
\bibliographystyle{plain}

\end{document}